\renewcommand{\baselinestretch}{1.2}
\newtheorem{theorem}{Theorem}
\newtheorem{lemma}{Lemma}
\newtheorem{corollary}{Corollary}
\newtheorem{proposition}{Proposition}
\theoremstyle{definition}
\newtheorem{remark}{Remark}
\newtheorem{example}{Example}
\keywords{Forms of higher degree, diagonal forms, Kneser's theorem, $u$-invariant, $p$-adic zeros,
Springer's theorem.}
\subjclass[2000]{Primary: 11E76; Secondary: 11E04, 12E05.}
\title{$U$-Invariants for forms of higher degree}
\author{S. Pumpl\"un}
\email{susanne.pumpluen@nottingham.ac.uk}
\address{School of Mathematics\\
University of Nottingham\\
University Park\\
Nottingham NG7 2RD\\
United Kingdom
 }
\begin{document}

\maketitle

\begin{abstract} Both a general and a diagonal $u$-invariant
for forms of higher degree are defined, generalizing the $u$-invariant of quadratic forms.
Both old and new results on these invariants are collected.
\end{abstract}

\section*{Introduction}
Let $k$ be a field of characteristic $0$ or greater than $d$.
The {\it
$u$-invariant (of degree $d$)} of $k$ is defined as $u(d, k) =
\sup \{ \dim_k \varphi \}$, where $\varphi$ ranges over all anisotropic forms
of degree $d$ over $k$.
Since for $d \geq 3$ not all forms are diagonal, we also
define the {\it diagonal $u$-invariant (of degree $d$)} over $k$
 as $u_{diag}(d, k) = \sup \{ \dim \varphi \}$, where
$\varphi$ ranges over all anisotropic diagonal forms over $k$.
Obviously, $u_{diag}(d, k) \leq u (d, k)$. For $d=2$ and ${\rm char}\, k\not=2$, the definitions of
$u_{diag}(d, k)$ and $ u (d, k)$ coincide and correspond to the classical $u$-invariant
of quadratic forms over $k$.

 The $d^{ th}$
{\it level} (called {\it power Stufe} [P-A-R]) $s_d (k)$ of $k$ is the least
positive integer $s$ for which the equation
$$- 1 = a_1^d + \dots + a_s^d$$
is solvable in $k$. If there is no such integer, define $s_d (k) =\infty$. In case $d$ is odd, $s_d (k) = 1$.
 For $d = 2$ the level of a
field $s_2 (k)$ was studied among others by Pfister [Pf2], who proved that $s_2(k)$ is always a power of 2 if finite.

A look at the literature reveals that there indeed exist many results on the $u$-invariant and the $d^{ th}$ level:

Parnami-Agrawal-Rajwade [P-A-R] investigated $s_4 (k)$ for $k$ a finite
field ${\mathbb F}_q$ or an imaginary quadratic number field.
For instance, $s_4(\mathbb Q(\sqrt{-m}))=15$ for $m=7\,{\rm mod}\, 8$, and
$s_4(\mathbb Q(\sqrt{-2}))=6$. Thus for $d\geq 3$,
the $d^{th}$ level is not always a power of 2.
Higher levels have been studied in the literature mostly for finite fields.
The value of $s_d ({\mathbb F}_q)$
was computed by Pall and Rajwade [P-R] for $d \leq 10$, using cyclotomic
numbers and Jacobi sums for finite fields. Amice-Kahn [A-K] studied $s_d ({\mathbb F}_q)$ for $d$ a power of $2$.
More recently, the $d^{th}$ levels of the finite prime fields ${\mathbb F}_p$
were computed by Becker-Canales [B-C]. They proved that $s_d({\mathbb F}_p)$
is determined by a formula involving the coefficients of the Gauss period equation of degree $d$
associated to $p$. These results gave new insight in the behaviour of $d^{th}$ levels of $p$-adic fields
$\mathbb{Q}_p$, $p\geq 2$.

 Until Merkurjev's celebrated results very little
was known about the values  the $u$-invariant of quadratic forms can take for a given nonreal field.
 The situation for forms of higher degree is similar.

A well-known result of Brauer [G, (8.1)] states that the $u$-invariants
$u(2, k), u(3, k),\dots, u(d, k)$ must be finite, provided that the diagonal $u$-invariants
$u_{diag}(2, k), u_{diag}(3, k),\dots, u_{diag}(d, k)$ are finite.
Estimates for $u(d, k)$ using $u_{diag}(d, k)$ and $u(s, k)$ for $s=2,\dots,d-1$
or $u_{diag}(d, k)$ are given by Leep-Schmidt [Le-S, Theorem 2] and others.

Chevalley's well-known theorem that every finite field is a $C_1$-field [S, p.~97]
implies that $u(d,{\mathbb F}_q)\leq d$. M. Orzech [O] proved the bound $u_{diag}(d,{\mathbb F}_q)\leq d-1$,
if $-1\in {\mathbb F}_q^{\times d}$ and $d \geq 4$.
For $d \leq 5$, there exist anisotropic diagonal forms of degree $d$ and dimension 3
precisely if $d = 4$ and $q = 5, 13, 29$; or if $d = 5$ and $q = 11$ [O], which yields
 $u (d,{\mathbb F}_q) \geq 3$ in these cases.

 Let $k$ be a $p$-adic field (i.e. a finite field extension of ${\mathbb Q}_{ p}$, or of
$\mathbb{F}_q((x))$) with
residue class field of cardinality $q=p^r$. These fields attracted a lot of
attention due to Artin's conjecture on the $u$-invariant  which
 states that $u(d,{\mathbb Q}_{ p})\leq d^2$. This
was verified for $d = 2$ by Hasse, for $d = 3$ (independently by Lewis [L] and
Demyanov [De1]) and for $d = 5, 7$ and 11, under the assumption that $q$ is large
enough (Birch and Lewis [Bi-L], Laxton-Lewis [La-L], see also Knapp [Kn] and
Leep-Yeomans [Le-Y]). Lower bounds for the size of $q$ in these cases were given in [Kn], [Le-Y].
 (Artin's conjecture is believed to be true for forms of
prime degree, it does not hold in general.)

Joly [J2, p.~97] proved $u_{diag}(d,{\mathbb Q}_{ p}) \leq d^2 $ unless $p=2$ and $d=2^r$, and that if $p=2$ and $d=2^r$,
then
$u_{diag}(d,{\mathbb Q}_{ p}) \leq 2 d^2 $. If $d$ is not divisible by $p$ and $k$ is an extension field of
${\mathbb Q}_p$ of degree $n \geq 2$, this inequality also holds for $k$. Alemu [A] obtained some more estimates for
$u_{diag}(d, k)$ if $p$ divides $n$ and $n \geq 2$.

For every prime $p$, the form $\langle  1,p,\dots,p^{d-1}\rangle$ of degree $d$ over ${\mathbb Q}$
is anisotropic over ${\mathbb Q}$ [I-R, p.~150]. Hence $u_{diag} (d, {\mathbb Q}) \geq d$.
If $K$ is an algebraic number field of finite degree over $\mathbb Q$, and if $d$ is an odd positive integer,
then there is an integer $M(K,d)$ such that for $n>M(K,d)$, any form of degree $d$ in $n$ variables
over $K$ is isotropic [G].
Heath-Brown [H-B] showed that each nonsingular cubic form over ${\mathbb Q}$
of dimension $\geq 10$ is isotropic using the ``circle method'' which was
already applied by Davenport [D] to show that any (even degenerate) cubic form
over ${\mathbb Q}$ of dimension $\geq 16$ is isotropic.
Moreover, there exist nondegenerate cubic forms in $9$ variables which are
anisotropic (e.g., the norm form of any central simple division algebra of degree $3$ over ${\mathbb Q}$)
 implying that $u(3,{\mathbb Q})\geq 9$.

A field $k$ is called a $C_i$-field if every form of degree $d$ over $k$ in at least $d^i+1$ variables is isotropic,
that is we have $u(d,k)\leq d^i$ in this case.
 If $k$ is a $C_i$-field, then so is each algebraic field extension $l$ over $k$, hence
also $u(d,l)\leq d^{i}$. The rational function field in one variable $k(t)$ is a $C_{i+1}$-field.
Moreover, $k = {\mathbb C} (t_1, \dots t_n)$ is a $C_n$-field.
These results of Tsen and Lang motivated much of the later work done in this direction ([S, p.~97], cf. also [G]).

 The techniques used to prove the above results are all intricate and
sophisticated and have their origin in different areas of mathematics.
In this paper more information about these ``higher'' invariants is collected and obtained.
 In the first section we cite some well-known or easy-to-prove statements for further reference.

In the second section we use Kneser's upper bound for the $u$-invariant of  forms of higher degree over fields with
finite square class group (Leep [Le, 2.3]) to obtain upper bounds for the diagonal $u$-invariant of $\mathbb{Q}_p$ or
finite field extensions of $\mathbb{Q}_p$, which superceed the ones given by Alemu [A].

In section three we generalize another classical result from the theory of quadratic forms
on the behaviour of the diagonal $u$-invariant of Henselian valued fields:
 let $(k, v)$ be a Henselian valued
field whose residue field $\overline{k}$ has char $\overline{k} \nmid d$.
Let $\Gamma$ be the value group of $v$. Then
$$u_{diag}(d, k)=|\Gamma/d \Gamma| u_{diag}(d, \overline{k})$$
with the obvious conventions for computing with infinite values. For quadratic forms
this result is due to Springer. As applications we get a wide range of  diagonal $u$-invariants:
Without using Tsen-Lang theory
we show $u_{diag}(d, K)=d^n u_{diag}(d, k)$ for the iterated
power series field $K=k((x_1))\dots((x_n))$ over any field with ${\rm char}\, k \nmid d$.
Let $k$ be a $p$-adic field (e.g. a finite field extension of $\mathbb{Q}_p$, or of
$\mathbb{F}_q((x))$) such that ${\rm char}\, \overline{k} \nmid d$.
Then $u_{diag}(d, k)=d \, u_{diag}(d, \overline{k})$.
 Let $R$ be a discrete valuation ring with residue field $\overline{k}$ and field of fractions
$k$. If $\overline{k}$ is algebraically closed then $k$ is $C_1$, i.e. $u(d, k)\leq d$. This was proved by Lang
using the theory of Witt vectors and Witt polynomials [G, (6.25)]. Using our generalization of Springer's theorem
for forms of degree $d$
we show that the bound $u(d, k)\leq d$
is indeed best possible for  algebraically closed $\overline{k}$ if
${\rm char}\, \overline{k} \nmid d$.

 It is a long-standing question whether the finiteness of
the classical $u$-invariant $u(k)$ (of quadratic forms) of a field $k$ of characteristic not 2 implies the finiteness of $u(k(t))$.
For $k$ a non-dyadic $p$-adic field
this was proved by Hoffmann-van Geel [Ho-V] and independently by Merkurjev [M].
Parimala-Suresh [Pa-S] were able to lower the bound for the $u$-invariant of a function field $K$ of transcendence degree one
 over a non-dyadic $p$-adic field from $u(K)\leq 22$ [Ho-V] to $u(K)\leq 10$. It is easy to see that
 $u(K)\geq 8$ and it was conjectured that $u(K)=8$.
It was shown recently by Zahidi [Z] that any quadratic form over $\mathbb{Q}(t_1,\dots,t_n)$
of dimension greater than $2^{n+2}$ will be
isotropic over the field $\mathbb{Q}_p(t_1,\dots,t_n)$ for almost all primes $p$, using the Ax-Kochen-Ersov transfer
theorem from the model theory of valued fields [Ax-K]. In particular,
any quadratic form over $\mathbb{Q}(t)$ of dimension greater than $8$ is
isotropic over $\mathbb{Q}_p(t)$ for almost all primes $p$. This result resembles two others,
also proved with the help of the Ax-Kochen-Ersov Theorem. The first result is due to Greenleaf [G, (9.2)]. It
states that every polynomial with integer coefficients without constant term of degree $d$ in $n>d$ variables
is isotropic over $\mathbb{Q}_p$ for almost all primes $p$. The second one is due to Ax-Kochen [G, (7.4)]. It states
that given a degree $d$, the field $\mathbb{Q}_p$ is a $C_2(d)$-field [G] for almost all primes $p$.
 I.e.,
for almost all primes $p$ a form of degree $d$ over $\mathbb{Q}_p$ of dimension greater than or equal to $d^2+1$
will be isotropic.

Applying Zahidi's method [Z] to
forms of higher degree, we conclude that any form of degree $d$ over the rational function field
$\mathbb{Q}(t_1,\dots,t_n)$ of dimension greater than $d^{n+2}$ is isotropic over
$\mathbb{Q}_p(t_1,\dots,t_n)$ for all but finitely many primes $p$.

\section{Preliminaries}

\subsection{}
Let $k$ be a field of characteristic $0$ or greater than $d$.
A $d$-{\it linear form} over $k$ is a $k$-multilinear map $\theta : V \times
\dots \times V \to k$ ($d$-copies) on a finite-dimensional vector space $V$ over $k$ which is {\it symmetric}, i.e. $\theta (v_1,
\dots, v_d)$ is invariant under all permutations of its variables.
A {\it form of degree $d$} over $k$ is a map $\varphi:V\to k$ on a finite-dimensional vector space $V$ over $k$
 such that $\varphi(a v)=a^d\varphi(v)$ for all $a\in k$, $v\in V$ and such that the map $\theta : V \times
\dots \times V \to k$ defined by
 $$\theta(v_1,\dots,v_d)=\frac{1}{d!}\sum_{1\leq i_1< \dots<i_l\leq d}(-1)^{d-l}\varphi(v_{i_1}+ \dots +v_{i_l})$$
($1\leq l\leq d$) is a $d$-linear form over $k$.
By fixing a basis $\{e_1,\dots,e_n\}$ of $V$, any form $\varphi$ of degree $d$ can be viewed as a homogeneous polynomial
of degree $d$ in $n={\rm dim}\, V$ variables $x_1,\dots,x_n$
 via $\varphi(x_1,\dots,x_n)=\varphi(x_1e_1+\dots+x_ne_n)$ and,
vice versa, any homogeneous polynomial of degree $d$ in $n$ variables over $k$ is a form of degree $d$ and dimension $n$
over $k$. Any $d$-linear form $\theta : V \times
\dots \times V \to k$ induces a form $\varphi: V\to k$ of degree $d$ via
$\varphi (v)=\theta(v,\dots,v)$.
 We can identify $d$-linear forms and forms of degree $d$ with
the help of the obvious correspondence.

\subsection{} Two $d$-linear spaces $(V_i,\theta_i)$, $i=1,2$, are called {\it isomorphic} (written
$(V_1,\theta_1)\cong (V_2,\theta_2)$ or just $\theta_1\cong\theta_2$) if there exists a bijective linear map
$f:V_1\to V_2$ such that $\theta_2(f(v_1),\dots,f(v_d))=\theta_1(v_1,\dots,v_d)$ for all $v_1,\dots,v_d\in V_1.$
A $d$-linear space $(V,\theta)$ (or the $d$-linear form $\theta$) is
called {\it nondegenerate} if $v = 0$ is the only vector such
that $\theta (v, v_{2}, \dots, v_d) = 0$ for all $v_i \in V$. A form of degree $d$ is
called {\it nondegenerate} if its associated $d$-linear form
is nondegenerate.

\subsection{}
The {\it orthogonal sum} $(V_1,\theta_1)\perp (V_2,\theta_2)$ of two $d$-linear spaces $(V_i,\theta_i)$, $i=1,2$, is defined
to be the $k$-vector space $V_1\oplus V_2$ together with the $d$-linear form
$$(\theta_1 \perp\theta_2)(u_1+v_1,\dots,u_d+v_d)=\theta_1(u_1,\dots,u_d)+\theta_2(v_1,\dots,v_d)$$
($u_i\in V_1$, $v_i\in V_2$). The {\it tensor product} $(V_1,\theta_1)\otimes (V_2,\theta_2)$
 is the $k$-vector space $V_1\otimes V_2$ together with the $d$-linear form
$$(\theta_1 \otimes \theta_2)(u_1\otimes v_1,\dots,u_d\otimes v_d)=\theta_1(u_1,\dots,u_d)\cdot\theta_2
(v_1,\dots,v_d)$$ [H-P].

A $d$-linear space $(V,\theta)$ is
called {\it decomposable} if $(V,\theta)\cong (V,\theta_1)\perp (V,\theta_2)$
for two non-zero $d$-linear spaces $(V,\theta_i)$, $i=1,2$.
A non-zero $d$-linear space $(V,\theta)$ is
called {\it indecomposable} if it is not decomposable and {\it absolutely indecomposable}, if
it stays indecomposable under each algebraic field extension.
If $d\geq 3$, $a_i,b_j\in k^\times$, then
$$\langle  a_1,\ldots ,a_n\rangle\cong \langle  b_1,\ldots ,b_n\rangle$$
 if and only if there is a permutation $\pi\in S_n$ such that
 $\langle b_i \rangle\cong \langle a_{\pi (i)}\rangle$ for every $i$.
(This is a special case of Harrison's Krull-Schmidt Theorem for $d$-linear forms [H, 2.3].)

\section{Some estimates on higher $u$-invariants}

Let $\varphi$ be a form of degree $d$ on a $k$-vector space
$V$. Write $D(\varphi)= \{a \in k^\times\mid\varphi(x)=a$ for some $x\in V\}$ for the set of non-zero
elements represented by $\varphi$. $\varphi$ is called {\it universal} if
$D(\varphi)=k^\times$.

\begin{lemma} ([Pu], cf. [L, p.~14] for $d=2$) Let $\varphi$ be a (perhaps degenerate) form of
degree $d$ over $k$ and $a\in k^\times$.\\
(i) If $a\in D(\varphi)$ then $\varphi\perp \langle  -a\rangle  $ is isotropic.\\
(ii) If $\varphi$ is anisotropic and $\varphi\perp \langle  -a\rangle  $ isotropic then $a\in D(\varphi)$.
\end{lemma}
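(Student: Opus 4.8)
The plan is to prove both directions by unpacking the definitions of isotropy, the orthogonal sum, and the representation set $D(\varphi)$. Recall that a form $\psi$ on $W$ is isotropic if there is a nonzero $w\in W$ with $\psi(w)=0$, and that for the orthogonal sum we have $(\varphi\perp\langle -a\rangle)(v,t)=\varphi(v)-a\,t^d$ for $v\in V$ and $t\in k$.

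For part (i), suppose $a\in D(\varphi)$, so there is some $x\in V$ with $\varphi(x)=a$. I would then exhibit the explicit nonzero vector $(x,1)\in V\oplus k$ and compute $(\varphi\perp\langle -a\rangle)(x,1)=\varphi(x)-a\cdot 1^d=a-a=0$. Since $(x,1)\neq 0$ (its last coordinate is nonzero regardless of whether $x=0$), this witnesses isotropy of $\varphi\perp\langle -a\rangle$.

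For part (ii), suppose $\varphi$ is anisotropic and $\varphi\perp\langle -a\rangle$ is isotropic. Then there is a nonzero $(x,t)\in V\oplus k$ with $\varphi(x)-a\,t^d=0$. The key case distinction is on whether $t=0$. If $t=0$, then $x\neq 0$ and $\varphi(x)=0$, contradicting the anisotropy of $\varphi$; so necessarily $t\neq 0$. I would then use the homogeneity $\varphi(cv)=c^d\varphi(v)$ to rescale: setting $c=t^{-1}$ and $y=t^{-1}x$, we get $\varphi(y)=t^{-d}\varphi(x)=t^{-d}\cdot a\,t^d=a$, so $a\in D(\varphi)$.

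The only genuine subtlety, and the step I would flag as the main obstacle, is part (ii): one must rule out $t=0$ before dividing, and this is exactly where the anisotropy hypothesis on $\varphi$ is used. Note that (i) requires no anisotropy assumption (the form may be degenerate, as the statement allows), whereas (ii) would fail without anisotropy, since an isotropic $\varphi$ could make the sum isotropic via a vector with $t=0$ while $a\notin D(\varphi)$. The homogeneity property $\varphi(cv)=c^d\varphi(v)$ built into the definition of a form of degree $d$ is what makes the rescaling in (ii) legitimate and is the precise analogue of the $d=2$ argument cited from [L, p.~14].
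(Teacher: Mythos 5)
Your proof is correct and is exactly the standard argument: the paper itself states this lemma without proof (citing [Pu], and Lam's book for $d=2$), and your argument for (i) via the explicit vector $(x,1)$, together with the case distinction on $t=0$ in (ii) --- using anisotropy to rule out $t=0$ and degree-$d$ homogeneity to rescale --- is precisely the intended generalization of the classical $d=2$ proof. Nothing is missing.
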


Similar to the well-known case $d = 2$, the $u$-invariants $u(d, k)$ and $u_{diag}(d, k)$ can be
characterized as follows:

\begin{lemma} (i) The diagonal $u$-invariant
$u_{diag}(d, k)$ is the smallest integer $n$ such that all diagonal forms of degree $d$ over
$k$ of dimension greater than $n$ are isotropic,
 and the $u$-invariant $u(d, k)$ is the smallest integer $n$ such that all forms of degree $d$ over
$k$ of dimension greater than $n$ are isotropic.\\
(ii) If $u = u(d, k)$ then each anisotropic form of degree $d$ over $k$ of
dimension $u$ is universal. If $u = u_{diag}(d, k)$ then each diagonal anisotropic form of degree $d$ over $k$ of
dimension $u$ is universal.\\
(iii) $u_{diag}(d, k) \leq \min \{ n \, \vert \,{\rm all}\,{\rm forms}\,{\rm of}\,{\rm degree}\, d \,{\rm over}\, k \, {\rm of}\,
  {\rm dimension}\, \geq n \,{\rm are}$

 \noindent $\,{\rm universal}\}$ with the understanding that the ``minimum'' of an empty set of integers
  is the symbol $\infty$.
\end{lemma}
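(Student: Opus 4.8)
The plan is to deduce all three parts from the representation Lemma~1 together with the definitions of the invariants as suprema of dimensions of anisotropic forms, with no deeper structure theory required. For part (i) I would write $u=u(d,k)$ and let $A$ be the set of dimensions of anisotropic forms of degree $d$ over $k$. Since $\langle 1\rangle$ is anisotropic, $A$ is a nonempty set of positive integers, so when $u=\sup A$ is finite it is attained by some anisotropic form. The key observation is that an integer $n$ satisfies ``every form of dimension $>n$ is isotropic'' exactly when $n$ is an upper bound for $A$, i.e.\ when $n\geq u$; hence the smallest such integer is $u$ itself, while if $u=\infty$ no such integer exists, which matches the stated convention. Repeating the argument with $A$ replaced by the set of dimensions of anisotropic \emph{diagonal} forms gives the assertion for $u_{diag}(d,k)$.

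For part (ii), given an anisotropic $\varphi$ of dimension $u=u(d,k)$ (so $u$ is in particular finite) and an arbitrary $a\in k^\times$, I note that $\varphi\perp\langle -a\rangle$ has dimension $u+1>u$ and is therefore isotropic by part (i). Since $\varphi$ is anisotropic, Lemma~1(ii) then forces $a\in D(\varphi)$; as $a$ was arbitrary, $\varphi$ is universal. The diagonal statement follows verbatim, since $\varphi\perp\langle -a\rangle$ remains diagonal and one invokes the diagonal half of part (i).

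For part (iii), let $m$ denote the minimum on the right-hand side. If the relevant set of integers is empty then $m=\infty$ and the inequality is trivial, so I may assume $m$ finite, i.e.\ every form of degree $d$ of dimension $\geq m$ is universal. By part (i) it suffices to show that every diagonal form of dimension $>m$ is isotropic. So let $\varphi=\langle a_1,\dots,a_s\rangle$ with $s>m$; the subform $\psi=\langle a_1,\dots,a_{s-1}\rangle$ is a form of degree $d$ of dimension $s-1\geq m$, hence universal, so $-a_s\in D(\psi)$. Applying Lemma~1(i) with $a=-a_s$ yields that $\psi\perp\langle a_s\rangle=\varphi$ is isotropic, as needed.

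The arguments are short because Lemma~1 already carries the substantive content; I expect the only real care to lie in the bookkeeping of part (i)—keeping the supremum, the ``smallest $n$'', and the infinite-value conventions consistently aligned—and in part (iii) in selecting the subform of dimension exactly $s-1$ so that it both falls in the universal range and has $\varphi$ as its orthogonal completion. I anticipate no substantive obstacle beyond this.
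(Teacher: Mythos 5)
Your proof is correct and follows exactly the route the paper intends: the paper dismisses this lemma with ``The proof is trivial and used Lemma 1,'' and your argument is precisely that trivial verification — part (i) from the supremum definition, part (ii) via Lemma~1(ii) applied to $\varphi\perp\langle -a\rangle$, and part (iii) via Lemma~1(i) applied to the codimension-one diagonal subform. No discrepancies to report.
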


The proof is trivial and used Lemma 1.

\begin{lemma} (i)  $u(3,k)\not=2$.\\
(ii) For every integer $n$ we have $u(d^n,k)\geq u(d,k)^n$.\\
(iii) If $u(d_1,k)=u$ then $u(md_1,k)\geq u$ for each integer $m>1$.
\end{lemma}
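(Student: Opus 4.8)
The three parts are independent, and my plan is to treat (ii) and (iii) by substituting forms into forms, and (i) via a norm form. Throughout I use that a nonzero homogeneous polynomial of degree $D$ is a form of degree $D$ and that it is anisotropic precisely when it has no nontrivial zero, so that anisotropy is a purely polynomial condition.

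For (iii), given an anisotropic form $\varphi$ of degree $d_1$ in $n$ variables, I would pass to its $m$-th power $x\mapsto\varphi(x)^m$, a form of degree $md_1$ in the same $n$ variables. It is anisotropic since $\varphi(x)^m=0$ iff $\varphi(x)=0$ iff $x=0$. Taking $\varphi$ of dimension $u=u(d_1,k)$ (or of arbitrarily large dimension when $u=\infty$) yields $u(md_1,k)\geq u$. For (ii) the $m$-th power does not help, because it leaves the number of variables unchanged; instead I would compose $\varphi$ with itself. Given anisotropic $\varphi$ of degree $d$ in $m$ variables, I split $m^2$ variables into blocks $x^{(1)},\dots,x^{(m)}$ of length $m$ and set
$$\varphi^{[2]}(x^{(1)},\dots,x^{(m)})=\varphi\bigl(\varphi(x^{(1)}),\dots,\varphi(x^{(m)})\bigr),$$
a form of degree $d^2$ in $m^2$ variables. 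Anisotropy is immediate: if $\varphi^{[2]}=0$ then anisotropy of the outer $\varphi$ forces each $\varphi(x^{(i)})=0$, hence each $x^{(i)}=0$. Iterating this $n$ times gives an anisotropic $\varphi^{[n]}$ of degree $d^n$ in $m^n$ variables (same one-line induction), and taking $\varphi$ of dimension $u(d,k)$, or of arbitrarily large dimension when $u(d,k)=\infty$, gives $u(d^n,k)\geq u(d,k)^n$.

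For (i), note first that $u(3,k)\geq1$ always, as $\langle1\rangle$ is anisotropic, so it suffices to show that an anisotropic binary cubic forces an anisotropic ternary cubic; this rules out the value $2$. If $\varphi(x,y)=ax^3+bx^2y+cxy^2+dy^3$ is anisotropic then $\varphi(1,0)=a\neq0$, and for every $t\in k$ we have $\varphi(t,1)\neq0$, so the monic cubic $f(t)=t^3+(b/a)t^2+(c/a)t+(d/a)$ has no root in $k$. A cubic with no root has no linear factor and is therefore irreducible, so $L=k[t]/(f)$ is a cubic field extension. The norm form $N_{L/k}\colon L\to k$ is then a ternary cubic, and it is anisotropic because $N_{L/k}(\alpha)=0$ forces $\alpha=0$. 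Hence $u(3,k)\geq3$ whenever $u(3,k)\geq2$, so $u(3,k)\neq2$.

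The constructions are essentially forced, and the anisotropy checks are one-liners; the only routine care is the bookkeeping with the supremum when the relevant invariant is infinite. The one genuinely substantive step is in (i): recognizing that an anisotropic binary cubic is the same datum as an irreducible cubic polynomial, and that passing to the norm form of the associated cubic extension is precisely what upgrades dimension $2$ to dimension $3$.
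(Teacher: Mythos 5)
Your proposal is correct and follows essentially the same route as the paper: (iii) by raising an anisotropic form of degree $d_1$ to the $m$-th power, (ii) by substituting $\varphi$ into itself block-wise and iterating, and (i) by converting an anisotropic binary cubic into an irreducible cubic polynomial, passing to the cubic field extension $L=k[t]/(f)$, and using the anisotropic norm form $N_{L/k}$ to produce an anisotropic ternary cubic. The only differences are cosmetic: you spell out the bookkeeping for the infinite-supremum case and the irreducibility argument (a rootless cubic has no linear factor) slightly more explicitly than the paper does.
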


\begin{proof} (i) Suppose that $u(3,k)=2$.
 This means there exists a homogeneous polynomial $f(x,y)=a_0x^3+a_1x^{2}y+a_2xy^2+a_3y^3$ in two variables over $k$
 which is anisotropic. Therefore there exists
an irreducible polynomial $f(t)=a_0+a_1t+a_2t^2+a_3t^3$  of degree $3$ over $k$, because
$f(x,y)$ is isotropic over $k$ if and only if $f(t)$ has a root in $k$.
(Note that $f(a,b)=a^3f(b/a)$ for $a\not=0$ and that $f(0,b)=a_3 b^3$.)
Hence there is a field extension $l/k$ of degree $3$. Let $n_{l/k}$ be its anisotropic norm
and let $\{v_1,v_2,v_3\}$ be a $k$-basis of
$l$. Then the form $g(x_1,x_2,x_3)=n_{l/l}(x_1v_1+x_2v_2+x_3v_3)$  is anisotropic and thus $u(3,k)\geq 3$, a contradiction.\\
(ii) Let $f(X)=f(x_1,\dots,x_u)$ be an anisotropic form of degree $d$ in $u$ variables over $k$. Let $f_2(X_1,\dots,X_u)=
f(f(X_1),\dots,f(X_u))$ where each $X_j$ is a different set of $u$ variables.
Since $f_2$  is an anisotropic form of degree $d^2$ in $u^2$ variables,
we have $u(d^2,k)\geq u^2$. Repeating this argument yields the assertion (see also [S, p.~99, 15.7]).\\
(iii) Suppose that $\varphi_1$ is an anisotropic form of degree $d_1$ over $k$. Then
$\varphi$ defined via $\varphi(z_1,\dots,z_u)=\varphi_1(z_1,\dots,z_u)^m$ is anisotropic of degree $md_1$.
\end{proof}

\begin{remark} (i) We have
$s_d (k) \leq u_{diag}(d, k) \leq u (d, k).$
This generalizes another well-known result for quadratic forms.
\\
(ii) Let $k$ be algebraically closed, then $\vert k^{\times} /k^{\times d}
\vert = 1$ and each form of degree $d$ and dimension $> 1$ over $k$ is
isotropic. In particular, $u_{diag}(d, k)=u(d, k) = 1$.
On the other hand, let $k$ be formally real.
Since $-1\not\in \sum k^2$, also $-1\not\in \sum k^d$ for even $d$. Thus $m\times \langle  1\rangle$
is an anisotropic form of degree $d$ for each integer $m$ and for $d$ even, we always know that
$u_{diag}(d,k)=\infty$,  hence $u(d,k)=\infty$.
Clearly, $u(d,\mathbb{R})=1$ for odd integers $d$.
\end{remark}

\begin{example}
(communicated to the author by D. Leep) Since the field $\mathbb{F}_p$ has an algebraic extension of degree $d$, there exists a homogeneous form
$\varphi(x_1,\dots,x_d)\in \mathbb{Z}[x_1,\dots,x_d]$ of degree $d$ in $d$ variables which is anisotropic modulo $p$.
The form $f=\langle 1,p,\dots, p^{d-1}\rangle\otimes \varphi$ is
of degree $d$ in $d^2$ variables with coefficients in $\mathbb{Z}$. Since $\varphi$ is anisotropic modulo $p$,
$f$ must be anisotropic over $\mathbb{Q}_p$. It follows that $f$ is anisotropic over $\mathbb{Q}$.
Thus $$u(d,{\mathbb Q}_p)\geq d^2$$ and $$u(d,{\mathbb Q})\geq d^2.$$
\end{example}

\begin{lemma} Let $\varphi$ be a form of degree $d$ over $k$.\\
(i) If $\varphi$ is
anisotropic over $k$, then $\varphi$ remains anisotropic over $k(t)$.\\
(ii) (for $d = 2$, cf. [La, p.~256]). If $\varphi$ is an anisotropic
form of degree $d$ over $k$, then
$$D(\varphi_{k(t)}) \cap k = D(\varphi).$$
\end{lemma}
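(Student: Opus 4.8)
The plan is to handle both parts with a single idea: when a form of degree $d$ is viewed as a homogeneous polynomial and its variables are substituted by polynomials in $t$, its behaviour is governed by its top-degree coefficient, and anisotropy over $k$ is exactly what keeps that coefficient nonzero. So in each part I would clear denominators and then read off a single leading coefficient.

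For (i), suppose toward a contradiction that $\varphi$ on $V=k^n$ became isotropic over $k(t)$, giving $f_1,\dots,f_n\in k(t)$, not all zero, with $\varphi(f_1,\dots,f_n)=0$. Multiplying through by a common denominator $g\in k[t]\setminus\{0\}$ and using homogeneity of degree $d$, I replace $f_i$ by the polynomials $p_i=gf_i\in k[t]$, so that $\varphi(p_1,\dots,p_n)=g^d\varphi(f_1,\dots,f_n)=0$ with not all $p_i$ zero. Let $m=\max_i\deg p_i$ and let $\bar p_i\in k$ be the coefficient of $t^m$ in $p_i$ (so $\bar p_i=0$ when $\deg p_i<m$). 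The coefficient of $t^{dm}$ in $\varphi(p_1,\dots,p_n)$ is then $\varphi(\bar p_1,\dots,\bar p_n)$, hence $\varphi(\bar p_1,\dots,\bar p_n)=0$. But $(\bar p_1,\dots,\bar p_n)\neq 0$ since some $p_i$ has degree exactly $m$, contradicting anisotropy of $\varphi$ over $k$.

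For (ii), the inclusion $D(\varphi)\subseteq D(\varphi_{k(t)})\cap k$ is immediate, as a representation over $k$ is in particular one over $k(t)$. For the reverse inclusion I take $a\in D(\varphi_{k(t)})\cap k$, so $a\in k^\times$ and $a=\varphi(f_1,\dots,f_n)$ with $f_i\in k(t)$; clearing denominators as above yields $p_i=gf_i\in k[t]$ with the identity $\varphi(p_1,\dots,p_n)=a\,g^d$ in $k[t]$. Since $a\neq 0$, the right-hand side has degree $d\deg g$ with top coefficient $a\,\bar g^{\,d}$, where $\bar g\in k^\times$ is the leading coefficient of $g$; the left-hand side has degree $dm$ (with $m=\max_i\deg p_i$) and top coefficient $\varphi(\bar p_1,\dots,\bar p_n)$, which is nonzero by anisotropy. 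Comparing gives $m=\deg g$ and $\varphi(\bar p_1,\dots,\bar p_n)=a\,\bar g^{\,d}$, so dividing the arguments by $\bar g\in k^\times$ and invoking homogeneity produces $a=\varphi(\bar p_1/\bar g,\dots,\bar p_n/\bar g)$ with all arguments in $k$, i.e. $a\in D(\varphi)$.

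The one point I would watch is that the tempting shortcut for (ii) — specialize $t\mapsto t_0\in k$ with $g(t_0)\neq 0$ and use homogeneity — only works when $k$ is infinite and breaks down over finite fields, where $g$ may vanish at every point of $k$. Comparing leading coefficients rather than evaluating sidesteps this entirely, so the argument is uniform in $k$ and uses only that $\varphi$ is anisotropic over $k$ (which by (i) already forces anisotropy over $k(t)$). I therefore expect the only genuinely delicate step to be the bookkeeping that the coefficient of $t^{dm}$ in $\varphi(p_1,\dots,p_n)$ is precisely $\varphi(\bar p_1,\dots,\bar p_n)$; this is the same leading-term computation underlying (i) and follows from $\varphi$ being homogeneous of degree $d$.
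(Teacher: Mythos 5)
Your proof is correct, but it takes a genuinely different route from the paper's, most visibly in part (ii). For (i), your argument and the paper's are mirror images of each other: the paper clears denominators, divides out the largest common power of $t$, and then evaluates at $t=0$ (i.e.\ reads off the lowest-order coefficients), while you read off the coefficient of $t^{dm}$; both rest on the same observation that for homogeneous $\varphi$ of degree $d$ the extremal coefficient of $\varphi(p_1,\dots,p_n)$ equals $\varphi$ applied to the corresponding extremal coefficients of the $p_i$. The real divergence is in (ii): the paper does no further polynomial computation at all, but instead reduces (ii) to (i) via its Lemma 1 --- from $a\in D(\varphi_{k(t)})$ it concludes that $\varphi_{k(t)}\perp\langle -a\rangle$ is isotropic over $k(t)$, hence $\varphi\perp\langle -a\rangle$ is isotropic over $k$ by (i), hence $a\in D(\varphi)$ by Lemma 1(ii) since $\varphi$ is anisotropic --- whereas you compare degrees and leading coefficients in the identity $\varphi(p_1,\dots,p_n)=a\,g^d$ and rescale by the leading coefficient of $g$. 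The paper's reduction is shorter and shows why Lemma 1 was set up first; your argument buys self-containedness (it never invokes Lemma 1), is constructive in that it exhibits the explicit representing vector $(\bar p_1/\bar g,\dots,\bar p_n/\bar g)$ over $k$, and, as you note, is uniform in $k$ --- though the paper's specialization at $t=0$ equally avoids the finite-field pitfall you flag, since $t=0$ is always a legitimate evaluation point after the normalization step.
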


\begin{proof} (i) Suppose that $\varphi$ is isotropic over $k(t)$, then there
are $f_i (t)/g_i (t) \in k(t)$, not all of them zero, such that $\varphi
\left( \frac{f_1(t)}{g_1(t)} , \dots, \frac{f_n(t)}{g_n(t)} \right) = 0$.
Clearing denominators we can assume without loss of generality that there are $f_i (t) \in
k[t]$, not all of them zero, such that $\varphi (f_1 (t), \dots, f_n (t)) =
0$. Changing these if necessary assume moreover that $t$ does not divide all
of them. Put $t = 0$ to obtain $\varphi (f_1 (0), \dots, f_n (0)) = 0$ with
not all of the $f_i (0)$ zero, so we found an isotropic vector of $\varphi$
over $k$.\\
(ii) Let $a \in D (\varphi_{k(t)}) \cap k$, then $\varphi_{k(t)} \perp
\langle - a \rangle$ is isotropic over $k(t)$ and therefore so is $\varphi
\perp \langle - a \rangle$ over $k$ by (i). This implies $a \in D (\varphi)$ by Lemma 1.
\end{proof}

 Let $R$ be a commutative unital ring, let $t$ be an
indeterminate over $R$ and let $S$ be any of the rings $R(t), R((t)), R[t]$
or $R[[t]]$. Given two homogeneous polynomials $f, g$ of degree $d$ over $R$,
the homogeneous polynomial $f \perp tg$ of degree $d$ over $S$ is anisotropic if
and only if both $f$ and $g$ are anisotropic over $R$ [Hu-R].

\begin{lemma} Let $K= k(t)$ be the rational function field or let $K=k((t))$ be the Laurent series field over $k$
 and let $\varphi$ be a form of degree $d$ over $k$. Then the form
$$\langle 1,t,t^2,\dots,t^{d-1}\rangle\otimes \varphi=\varphi\perp
t\varphi \perp t^2 \varphi \perp \dots\perp t^{d-1} \varphi$$
 of degree $d$ is anisotropic over $K$ if and only if $\varphi$ is anisotropic over $k$.
\end{lemma}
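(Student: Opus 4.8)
The plan is to prove the Laurent series case $K=k((t))$ first by a valuation argument, and then to deduce the rational function case $K=k(t)$ from it, since $k(t)$ embeds into $k((t))$ and anisotropy descends to subfields. One direction is immediate and common to both choices of $K$: if $\varphi$ is isotropic over $k$, then it is isotropic over $K\supseteq k$, and setting every block other than the first equal to $0$ exhibits a nontrivial zero of $\langle 1,t,\dots,t^{d-1}\rangle\otimes\varphi$. Contrapositively, anisotropy of the big form forces anisotropy of $\varphi$, so all the real work lies in the converse.

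For the hard direction over $K=k((t))$, I would equip $K$ with the $t$-adic valuation $v$, whose residue field is exactly $k$ and for which reduction modulo $t$ maps $k[[t]]$ onto $k$. The central claim is that
$$v(\varphi(x))\equiv 0 \pmod d \qquad\text{for every nonzero } x\in k((t))^n.$$
To prove it, put $m=\min_j v(x_j)$ and write $x=t^m y$ with $y\in k[[t]]^n$ whose reduction $\bar y\in k^n$ is nonzero, since at least one component of $y$ is a unit. Homogeneity gives $\varphi(x)=\varphi(t^m y)=t^{md}\varphi(y)$, and because $\varphi$ has coefficients in $k$ the reduction of $\varphi(y)$ modulo $t$ equals $\varphi(\bar y)$, which is nonzero as $\varphi$ is anisotropic over $k$ and $\bar y\neq 0$. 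Hence $v(\varphi(y))=0$ and $v(\varphi(x))=md$, a multiple of $d$.

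Granting the claim, suppose $\langle 1,t,\dots,t^{d-1}\rangle\otimes\varphi$ had a nontrivial zero $(x_0,\dots,x_{d-1})$, that is $\sum_{i=0}^{d-1} t^i\varphi(x_i)=0$ with not all $x_i$ equal to $0$. For each index $i$ with $x_i\neq 0$ the claim yields $v(t^i\varphi(x_i))=i+v(\varphi(x_i))\equiv i\pmod d$. As $i$ runs through $0,\dots,d-1$ these residues are pairwise distinct, so the nonzero terms $t^i\varphi(x_i)$ have pairwise distinct valuations; a sum of elements with pairwise distinct valuations has valuation equal to its unique minimum and is therefore nonzero, contradicting the vanishing of the sum. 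Thus the form is anisotropic over $k((t))$. For $K=k(t)$ I would then invoke the inclusion $k(t)\hookrightarrow k((t))$: the form is anisotropic over the larger field $k((t))$ by the previous step, hence a fortiori over $k(t)$, which together with the easy direction settles that case as well.

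The one point needing care, and the crux of the whole argument, is the displayed claim that anisotropy of $\varphi$ over the residue field forces $v(\varphi(x))$ to be divisible by $d$; everything after it is bookkeeping with the valuation. This is exactly the higher-degree analogue of the mechanism behind the cited statement [Hu-R] on $f\perp tg$, which is the instance isolating a single extra block, and it is the part where I expect the main subtlety to reside.
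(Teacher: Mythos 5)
Your proof is correct, but it takes a genuinely different route from the paper's. The paper works directly over $k(t)$ by elementary descent: assuming a nontrivial polynomial solution of $\varphi\perp t\varphi\perp\dots\perp t^{d-1}\varphi$ chosen with minimal degree sum, it sets $t=0$, uses anisotropy of $\varphi$ over $k$ to conclude that the first block of entries vanishes at $0$, writes those entries as $t g_i(t)$, substitutes, cancels a factor of $t$, and obtains a smaller solution, a contradiction; it then asserts that the same descent applies verbatim to $k((t))$. You instead prove the Laurent series case first by a valuation argument whose key point is that anisotropy of $\varphi$ over the residue field $k$ forces $v(\varphi(x))\equiv 0 \pmod d$ for all nonzero $x$, so the nonzero terms $t^i\varphi(x_i)$ lie in pairwise distinct classes modulo $d$ and hence cannot sum to zero by the ultrametric inequality; the $k(t)$ case then follows from the embedding $k(t)\hookrightarrow k((t))$, since anisotropy passes down to subfields. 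Both arguments are complete and both extend to the generalization recorded right after the lemma (distinct anisotropic forms $\varphi_1,\dots,\varphi_d$ in the blocks). What your route buys is conceptual economy: one argument handles both fields at once, and the divisibility claim $v(\varphi(x))\equiv 0\pmod d$ is precisely the mechanism behind Springer's theorem, which the paper goes on to generalize in Section 3 (Theorem 2), so your proof foreshadows that result. What the paper's route buys is elementarity: it needs no valuation theory or completion, stays entirely inside $k[t]$, and in particular avoids the (mild but real) dependence of your argument on the fact that $k(t)$ embeds in $k((t))$.
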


\begin{proof}  Let $K= k(t)$ be the rational function field.
Let $\varphi$ be a form over $k$ in $n$ variables. Assume that the form $\varphi\perp
t\varphi \perp t^2 \varphi \perp \dots\perp t^{d-1} \varphi$ of degree $d$ is isotropic over $k(t)$.
Then there exist polynomials $f_i(t)\in k[t]$, not all of them zero, such that
$$ (1) \,\, \varphi(f_1(t),\dots,f_n(t))+
t\varphi(f_{n+1}(t),\dots,f_{2n}(t)) + t^2 \varphi(f_{2n+1}(t),\dots,f_{3n}(t)) + \dots$$
$$+ t^{d-1}
 \varphi(f_{(d-1)n+1}(t),\dots,f_{dn}(t))=0.$$
Assume additionally that the value for $\sum_{i=1}^n {\rm deg}\, f_i(t)$ is minimal. Plugging in $t=0$ shows that
$ \varphi(f_1(0),\dots,f_n(0))=0$ and thus
$f_i(0)=0$ for all $i$, $1\leq i\leq n$, because $ \varphi$ is anisotropic over $k$. Hence $f_i(t)=tg_i(t)$ for $1\leq i\leq n$. Substituting this into $(1)$ and cancelling $t$ yields
a version of $(1)$ with decreased $\sum_{i=1}^n {\rm deg}\, f_i(t)$, a contradiction.

The same argument applies if $K= k((t))$ is the Laurent series field.
\end{proof}

Indeed, using the above notation a similar argument shows that given forms $\varphi_i$ of degree $d$ over $k$,  the form
$\varphi_1\perp
t\varphi_2 \perp t^2 \varphi_3 \perp \dots\perp t^{d-1} \varphi_d$
 of degree $d$ is anisotropic over $K$ if and only if the forms $\varphi_i$ are  anisotropic over $k$ for all $i$.

\begin{corollary} Let $K= k(t)$ be the rational function field or let $K=k((t))$ be the Laurent series field over $k$.
Then
$$ u (d,K)\geq d\, u(d, k),$$
$$ u_{diag} (d,K)\geq d\, u_{diag}(d, k) ,$$
 and $s_d (k) = s_d (k(t))$.
\end{corollary}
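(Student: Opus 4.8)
The plan is to read off the first two inequalities directly from Lemma 5 (the anisotropy criterion for $\langle 1,t,\dots,t^{d-1}\rangle\otimes\varphi$) and to treat the level equality separately via Lemma 4. For the $u$-invariant bound, if $u(d,k)$ is finite I would pick an anisotropic form $\varphi$ of degree $d$ over $k$ with $\dim_k\varphi = u(d,k)$; by Lemma 5 the form $\langle 1,t,\dots,t^{d-1}\rangle\otimes\varphi$ is anisotropic over $K$ of dimension $d\dim_k\varphi = d\,u(d,k)$, whence $u(d,K)\ge d\,u(d,k)$. When $u(d,k)=\infty$, the same construction applied to anisotropic forms of arbitrarily large dimension forces $u(d,K)=\infty$, matching the stated conventions. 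The diagonal bound follows in the same way, once one observes that if $\varphi=\langle a_1,\dots,a_n\rangle$ is diagonal then $\langle 1,t,\dots,t^{d-1}\rangle\otimes\varphi$ is again the diagonal form $\langle t^j a_i\rangle_{0\le j\le d-1,\,1\le i\le n}$; so anisotropic diagonal forms over $k$ of dimension $u_{diag}(d,k)$ produce anisotropic diagonal forms over $K$ of dimension $d\,u_{diag}(d,k)$. Since Lemma 5 covers both $K=k(t)$ and $K=k((t))$, both cases are dispatched simultaneously.

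For the level I would prove $s_d(k(t))\le s_d(k)$ and $s_d(k)\le s_d(k(t))$ separately. The first is immediate, since any identity $-1=a_1^d+\dots+a_s^d$ with $a_i\in k$ remains valid over $k(t)$. For the reverse, put $s:=s_d(k(t))$, which I may assume finite (the infinite case being vacuous), so that $-1\in D((s\times\langle 1\rangle)_{k(t)})$. If $s\times\langle 1\rangle$ is anisotropic over $k$, then Lemma 4(ii) gives $D((s\times\langle 1\rangle)_{k(t)})\cap k = D(s\times\langle 1\rangle)$; as $-1\in k$ lies in the left-hand side, it lies in $D(s\times\langle 1\rangle)$, i.e. $s_d(k)\le s$.

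The step I expect to need the most care is the remaining possibility, that $s\times\langle 1\rangle$ is isotropic over $k$, where Lemma 4(ii) does not apply. Here I would argue that an isotropy relation $\sum_{i=1}^s c_i^d=0$ over $k$ with the $c_i$ not all zero must have at least two nonzero summands, because $c^d=0$ forces $c=0$ in a field; dividing through by one nonzero $c_j^d$ then exhibits $-1$ as a sum of $s-1$ $d$-th powers in $k$, so once more $s_d(k)\le s-1\le s$. In either case $s_d(k)\le s=s_d(k(t))$, which together with the easy inequality yields $s_d(k)=s_d(k(t))$. (In fact the isotropic case contradicts the easy inequality and so cannot occur for the minimal $s$, but it is cleanest simply to note that it still delivers the bound we need.) The finite/infinite bookkeeping at the endpoints is routine.
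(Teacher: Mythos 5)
Your proposal is correct and matches the paper's proof, which simply derives the corollary as a consequence of Lemma 4 and Lemma 5: you use Lemma 5 (with the observation that $\langle 1,t,\dots,t^{d-1}\rangle\otimes\varphi$ stays diagonal when $\varphi$ is) for the two $u$-invariant inequalities, and Lemma 4 for the level equality, exactly as the paper intends. Your handling of the level via Lemma 4(ii) plus the isotropic-case analysis is a slightly roundabout but valid way to get what Lemma 4(i) gives directly (isotropy of $(s+1)\times\langle 1\rangle$ descends from $k(t)$ to $k$), so it is the same approach in substance.
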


\begin{proof} This is a consequence of Lemma 4 and Lemma 5.
\end{proof}

\begin{example} (i) Let $k = k_0 (t_1, \dots, t_n)$ with $k_0$ a field of
characteristic 0 or $> d$ and with $t_1, \dots, t_n$ independent
indeterminates over $k$. By induction on $n$ (using the above results) the form $\langle 1, t_1, t_1^2, \dots, t_1^{d-1} \rangle
\otimes \dots \otimes \langle 1, t_n, \dots, t_n^{d-1} \rangle$ of degree $d$ and
dimension $d^n$ is anisotropic over $k$. Hence $u(d, k_0
(t_1, \dots, t_n)) \geq u_{diag}(d, k_0 (t_1, \dots, t_n)) \geq d^n$. Since
$k = {\mathbb C} (t_1, \dots t_n)$ is a $C_n$-field [S,
p.~97] it follows that $u(d, {\mathbb C} (t_1, \dots, t_n)) \leq d^n$ and
$u_{diag}(d, {\mathbb C} (t_1, \dots, t_n)) \leq d^n$. We conclude that
$$u(d,{\mathbb C} (t_1, \dots, t_n)) = u_{diag}(d, {\mathbb C} (t_1, \dots, t_n)) =
d^n.$$
 Thus every power of $d$ is the $u(d, k)$-invariant (resp. the $u_{diag}(d,
k)$-invariant) of some suitable field $k$.\\
(ii) Let $k_0$ be a field of characteristic 0 or $> 3$, such that there exists a division algebra of degree 3 over $k_0$.
Then $u(3,k_0)\geq 9$ and  $u(3, k_0(t)) \geq 3\cdot 9=27$ by Proposition 1.
Indeed, there is an Albert division algebra over $k_0(t)$ with $t$ an indeterminate over $k_0$ [KMRT, p.~531].
Since its norm is anisotropic, it is an example of an absolutely indecomposable cubic form
over $k_0(t)$ of dimension $27$.
\end{example}

\section{Kneser's theorem for forms of higher degree}

\begin{theorem} (Kneser's theorem Leep [Le, 2.3] or [De2], see [S, p.~104] for $d=2$)
Let $k$ be a field of arbitrary characteristic with finite $d^{th}$ level $s_d
(k)  < \infty$. Then
$$u_{diag}(d,k) \leq \vert k^{\times} /k^{\times d} \vert.$$
In particular, let $\varphi\cong \langle  a_1,\dots,a_m\rangle$ be an anisotropic form of degree $d$ over $k$ such that
$m=|k^{\times} / k^{\times d}|$. Then $\varphi$ is universal or isotropic.
\end{theorem}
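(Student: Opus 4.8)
The plan is to prove the sharper statement that every diagonal form of dimension exceeding $q:=|k^\times/k^{\times d}|$ is isotropic; the displayed bound $u_{diag}(d,k)\le q$ and the final ``universal or isotropic'' clause will both follow. The engine is a chain argument on the classes represented by successive subforms. Given an anisotropic diagonal form $\varphi=\langle a_1,\dots,a_n\rangle$, put $\varphi_i=\langle a_1,\dots,a_i\rangle$ and let $\bar D_i\subseteq k^\times/k^{\times d}$ be the image of $D(\varphi_i)$. Because $\varphi_i(ux)=u^d\varphi_i(x)$, the set $D(\varphi_i)$ is a union of $d$-th power cosets, so $\bar D_i$ is well defined, $\bar D_1$ is a single class, and (as $\varphi_i$ is a subform of $\varphi_{i+1}$) one has $\bar D_i\subseteq\bar D_{i+1}$. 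Each $\varphi_i$ inherits anisotropy from $\varphi$. If I can show every such inclusion is \emph{strict}, then $|\bar D_n|\ge n$, and since $\bar D_n\subseteq k^\times/k^{\times d}$ this forces $n\le q$.

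The heart of the matter is therefore the growth lemma: if $\varphi_i$ and $\varphi_{i+1}=\varphi_i\perp\langle a_{i+1}\rangle$ are both anisotropic, then $\bar D_{i+1}\supsetneq\bar D_i$. I would argue by contradiction. If $\bar D_{i+1}=\bar D_i$, then since both value sets are unions of the same cosets and $D(\varphi_i)\subseteq D(\varphi_{i+1})$, in fact $D(\varphi_{i+1})=D(\varphi_i)$. Reading off the values $\varphi_{i+1}(x,w)=\varphi_i(x)+a_{i+1}w^d$ then yields the closure property
$$v+a_{i+1}w^d\in D(\varphi_i)\qquad\text{for all }v\in D(\varphi_i),\ w\in k,$$
and, on taking $x=0$, that $a_{i+1}\in D(\varphi_i)$.

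This is exactly where the hypothesis $s_d(k)<\infty$ enters, and it is the step I expect to be the main obstacle to pin down correctly. Write $-1=c_1^d+\dots+c_s^d$ with $s=s_d(k)$. Starting from $v_0=a_{i+1}\in D(\varphi_i)$ and adjoining the terms $a_{i+1}c_1^d,\dots,a_{i+1}c_s^d$ one at a time, the closure property keeps every partial sum inside $D(\varphi_i)$; the final sum telescopes to $a_{i+1}\bigl(1+\sum_{j}c_j^d\bigr)=a_{i+1}\cdot 0=0$. Hence $0\in D(\varphi_i)$, contradicting the anisotropy of $\varphi_i$. This establishes the growth lemma, and with it the bound $u_{diag}(d,k)\le q$. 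Note that the argument uses no division by $d!$ and so is valid in arbitrary characteristic, as claimed.

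Finally, for the ``in particular'' clause, let $\varphi=\langle a_1,\dots,a_m\rangle$ be anisotropic with $m=q$, and let $b\in k^\times$ be arbitrary. The diagonal form $\varphi\perp\langle-b\rangle$ has dimension $q+1>q$, hence is isotropic by the bound just proved; since $\varphi$ is anisotropic, Lemma 1(ii) gives $b\in D(\varphi)$. As $b$ was arbitrary, $\varphi$ is universal, which is the assertion (an anisotropic form being, a fortiori, not isotropic).
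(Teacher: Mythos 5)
Your proof is correct. The paper itself contains no proof of Theorem 1 — it only cites Demyanov [De2] and Leep [Le, 2.3] and remarks that those proofs are essentially identical — and your chain argument (strict growth of the represented power-classes $\bar D_i$, with the finite level $s_d(k)$ used to telescope $a_{i+1}(1+c_1^d+\dots+c_s^d)=0$ and contradict anisotropy) is exactly the classical Kneser argument for $d=2$ in [S, p.~104] that those references generalize, so your route coincides with the one the paper points to.
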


\begin{remark} Although Theorem 1 was already proved by Demyanov [De2] in 1956, it seems
that this paper, written in Russian and apparently never translated, is mostly unknown to the public.
 It was proved again for diagonal forms of degree $d$ over nonreal fields as well as for odd degree $d$
 over real fields  by Leep [Le, 2.3].
 Kneser's theorem for diagonal forms of degree $d$ over finite fields $\mathbb{F}_q$ can be found in
  [J1, Th\'{e}or\`{e}me 1, p.~25] or Small [Sm2, 3.13]).
The proofs presented in those papers are basically identical to the one employed in [De2] (1956).
\end{remark}

\begin{example} (i) Let $k=\mathbb{F}_q$ be a finite field, where $q=p^s$ with $p$ prime.
 Then
$|\mathbb{F}_q^{\times}/\mathbb{F}_q^{\times d}|= {\rm gcd}(d, q-1)$. Thus
$$u_{diag}(d,\mathbb{F}_q) \leq  {\rm gcd}(d, q-1).$$
In particular, $s_d(\mathbb{F}_q)=u_{diag}(d,\mathbb{F}_q)=1$, if $d$ is relatively prime to $q-1$.\\
(ii) Let $K$ be a finite field extension of the field $\mathbb{Q}_p$ of $p$-adic numbers. Since the $d^{th}$ power level
of $K$ is finite [G, (7.18)], we have
$$u_{diag}(d,K) \leq \frac{d}{|d|_p} w $$
 where $w$ is the number of $d^{th}$ roots of
unity contained in $K$ and $|d|_p=1/p^{ord_p(d)}$ with $ord_p(d)$ being the highest power of $p$ dividing $d$ [Ko, p.~73].
This greatly improves the bound given by Alemu [A] which depends on the
 degree of the field extension $n=[K:\mathbb{Q}_p]$:
$$u_{diag}(d,K) < {\rm max}\,(3nd^2-nd+1,2d^3-d^2)$$
if $p>2$ divides $d$, and
$$u_{diag}(d,K) < 4nd^2-nd+1$$
if $p=2$.

If $d$ is  not divisible by $p$ and $k$ contains no $d^{th}$ roots of unity other than $1$ itself
(e.g. $K=\mathbb{Q}_p$ and
$d$ is relatively prime to both $p$ and $p-1$ [Ko, p.~73]), then even $u_{diag}(d,K) \leq d$.
\end{example}

Indeed, there is a similar result as Theorem 1 for complete discrete valuation ring of characteristic 0
 with finite residue field (with the obvious definition for $u_{diag}(d,R)$):

\begin{proposition} ([G, p.~135]) Let $R$ be a complete discrete valuation ring
of characteristic 0 with finite residue field.\\
(i)  Suppose $d$ is odd. Then
  $$u_{diag}(d,R)\leq d \, |R^{\times}/R^{\times d}|.$$
 In particular, also $s_d(R)\leq d\, |R^{\times}/R^{\times d}|$.
\\
(ii)  Suppose $d$ is even. Then
  $$u_{diag}(d,R)\leq (1+m_d)\, d \, |R^{\times}/R^{\times d}|,$$
  where $m_d$ is the smallest positive integer such that $-m_d$ has a $d^{th}$ root in $R$.
 In particular, also $s_d(R)\leq  (1+m_d)\,d\, |R^{\times}/R^{\times d}|$.
\end{proposition}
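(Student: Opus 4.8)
The plan is to deduce everything from Kneser's theorem (Theorem~1) applied to the fraction field $k={\rm Frac}(R)$, which is a finite extension of some $\mathbb{Q}_p$. The first step is to identify $u_{diag}(d,R)$ with $u_{diag}(d,k)$. Under the natural definition a diagonal form over $R$ is anisotropic exactly when it has no nontrivial zero in $R$; clearing denominators and dividing out the highest common power of a uniformizer $\pi$ shows this is equivalent to having no nontrivial zero in $k$. Conversely, scaling each variable $x_i$ by a suitable power of $\pi$ carries any diagonal form over $k$ to an isomorphic one with coefficients in $R$, without changing its dimension or its isotropy over $k$. Hence anisotropic diagonal forms over $R$ and over $k$ of each given dimension correspond, and $u_{diag}(d,R)=u_{diag}(d,k)$.

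Next I would compute the index appearing in Theorem~1. Choosing $\pi$ splits the valuation map $k^\times\to\mathbb{Z}$ (with kernel $R^\times$), giving $k^\times\cong\mathbb{Z}\times R^\times$, so that $k^\times/k^{\times d}\cong\mathbb{Z}/d\mathbb{Z}\times R^\times/R^{\times d}$ and $|k^\times/k^{\times d}|=d\,|R^\times/R^{\times d}|$. This is finite because $R^\times$ is a profinite group whose $d$-th powers form a finite-index, hence open, subgroup. It then remains only to verify the hypothesis $s_d(k)<\infty$ and to invoke the theorem.

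For odd $d$ this is immediate, as $-1=(-1)^d$ gives $s_d(k)=1$. The even case is the heart of the matter and is where $m_d$ enters. Since $R^{\times d}$ is open and contains $1$, it contains $1+\pi^M R$ for some $M$, and hence contains $1+p^N\mathbb{Z}_p$ for some positive integer $N$. Taking $m=p^N-1$ one has $-m=1-p^N\in 1+p^N\mathbb{Z}_p\subseteq R^{\times d}$, so a positive integer $m$ with $-m\in R^{\times d}$ exists; let $m_d$ be the least such. Writing $c^d=-m_d$, the identity $-1=c^d+(m_d-1)\cdot 1^d$ presents $-1$ as a sum of $m_d$ $d$-th powers, so $s_d(k)\le m_d<\infty$. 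I expect producing such an $m_d$ — equivalently, writing $-1$ as a sum of $d$-th powers in $k$ — to be the only genuinely nontrivial point.

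With $s_d(k)<\infty$ in hand, Theorem~1 gives $u_{diag}(d,R)=u_{diag}(d,k)\le|k^\times/k^{\times d}|=d\,|R^\times/R^{\times d}|$ for every $d$. This is precisely the bound in (i), and since $1+m_d\ge 1$ it is a fortiori the weaker bound in (ii); the extra factor $1+m_d$ in the stated even-degree bound merely reflects the more elementary, Kneser-free argument of [G]. Finally the level estimates follow from $s_d(R)\le u_{diag}(d,R)$: if $\langle 1,\dots,1\rangle$ in $s$ variables had a nontrivial zero in $R$, dividing by a coordinate that is a unit would write $-1$ as a sum of $s-1$ $d$-th powers in $R$; hence the form with $s_d(R)$ ones is anisotropic, which gives $s_d(R)\le u_{diag}(d,R)$ and the asserted bounds on $s_d(R)$.
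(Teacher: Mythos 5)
Your proof is correct, but it follows a genuinely different route from the paper, which in fact gives no proof of this Proposition at all: the statement is quoted from Greenberg [G, p.~135], where it is obtained by a direct, self-contained argument over the ring $R$, and the factor $(1+m_d)$ for even $d$ is an artifact of that argument (using a $d$-th root of $-m_d$ to handle the absence of sign changes). You instead subsume the Proposition under Kneser's theorem (Theorem 1): the identification $u_{diag}(d,R)=u_{diag}(d,k)$ for $k$ the fraction field (which is exactly the paper's Remark 3), the index formula $|k^{\times}/k^{\times d}|=d\,|R^{\times}/R^{\times d}|$ coming from the splitting $k^{\times}\cong \mathbb{Z}\times R^{\times}$, and the verification that $s_d(k)<\infty$. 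This buys a strictly stronger conclusion: the bound $d\,|R^{\times}/R^{\times d}|$ holds for even $d$ as well, so the factor $(1+m_d)$ is superfluous; moreover your index formula shows that the minimum taken in the paper's Remark 3, between the Kneser bound $|k^{\times}/k^{\times d}|$ and the Greenberg bound, is always just the Kneser bound. What the paper's (i.e.\ Greenberg's) route buys in exchange is independence from Theorem 1, which is itself a nontrivial result of Demyanov/Leep, and a proof of finiteness of the level as a byproduct rather than as an input.

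Two points should be tightened. First, your sentence asserting finiteness of $|R^{\times}/R^{\times d}|$ (``finite-index, hence open'') is circular as phrased, since finite index is what is being claimed; the correct order of deduction --- and the only place where completeness of $R$ genuinely enters your argument --- is that Hensel's lemma gives $1+\pi^{2v(d)+1}R\subseteq R^{\times d}$, so $R^{\times d}$ is open, and then finite index follows because $R^{\times}/(1+\pi^{M}R)$ is finite (the residue field being finite). The same Hensel step is what licenses the containment $1+\pi^{M}R\subseteq R^{\times d}$ on which your construction of $m_d$ rests, so it should be invoked explicitly. Second, your $m_d$ (least $m$ with $-m\in R^{\times d}$) is a priori larger than the $m_d$ of the statement (least $m$ with $-m$ having a $d$-th root in $R$, not necessarily a unit); this is harmless, since you only need the existence of some such integer to conclude $s_d(k)<\infty$, and your final bound does not involve $m_d$ at all, but the discrepancy deserves a remark.
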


 That $R$ has finite level is also shown in [J2].

\begin{remark} If $R$ is an integral domain with quotient field $K$, then
 $u_{diag}(d,R)=u_{diag}(d,K)$. For a complete discrete valuation ring $R$ of characteristic 0 with finite residue
 field,  we thus obtain
 $$u_{diag}(d,R)=u_{diag}(d,K)\leq {\rm min}( \vert K^{\times} /K^{\times d} \vert, d|R^{\times}/R^{\times d}|)$$
 if  $d$ is odd, and
  $$u_{diag}(d,R)=u_{diag}(d,K)\leq {\rm min}( \vert K^{\times} /K^{\times d} \vert, (1+m_d) d|R^{\times}/R^{\times d}|)$$
 if  $d$ is even.
\end{remark}

\begin{corollary} Let $K$ be a finite field extension of $\mathbb{Q}_p$ of degree $n$, with valuation ring $R$
and with residue field $\mathbb{F}_q$, where $q=p^f$. Then
  $$u_{diag}(d,R)\leq d\, {\rm gcd}(d,q-1)| \mathbb{Z}_p /d \mathbb{Z}_p|^e,$$
 if $d$ is odd and
   $$u_{diag}(d,R)\leq (1+m_d)\,d\, {\rm gcd}(d,q-1)| \mathbb{Z}_p /d \mathbb{Z}_p|^e,$$
if $d$ is even, with $e$ copies of $\mathbb{Z}_p /d \mathbb{Z}_p$,
  where $e$ is the ramification index of $K$ over $\mathbb{Q}_p$.
\end{corollary}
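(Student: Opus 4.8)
The plan is to obtain both inequalities directly from Proposition 1 by evaluating the index $|R^{\times}/R^{\times d}|$ for the valuation ring $R$ of $K$. Since $K$ is a finite extension of $\mathbb{Q}_p$, its valuation ring $R$ is a complete discrete valuation ring of characteristic $0$ with finite residue field $\mathbb{F}_q$, so Proposition 1 applies and yields $u_{diag}(d,R)\leq d\,|R^{\times}/R^{\times d}|$ for odd $d$ and $u_{diag}(d,R)\leq (1+m_d)\,d\,|R^{\times}/R^{\times d}|$ for even $d$. Everything therefore reduces to the purely local computation
$$|R^{\times}/R^{\times d}| = {\rm gcd}(d,q-1)\,|\mathbb{Z}_p/d\mathbb{Z}_p|^{e},$$
after which one substitution settles both parities at once, the factor $1+m_d$ in the even case being supplied verbatim by Proposition 1(ii).

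To compute the index I would split the unit group by the Teichm\"uller decomposition $R^{\times}\cong \mu_{q-1}\times U^{(1)}$, where $\mu_{q-1}$ is the cyclic group of $(q-1)$-st roots of unity, mapping isomorphically onto $\mathbb{F}_q^{\times}$, and $U^{(1)}=1+\mathfrak{m}$ is the group of principal units. Because this is a direct product, the index factors as $|R^{\times}/R^{\times d}| = |\mu_{q-1}/\mu_{q-1}^{\,d}|\cdot|U^{(1)}/(U^{(1)})^{d}|$. The tame factor is immediate: $\mu_{q-1}$ is cyclic of order $q-1$, so $|\mu_{q-1}/\mu_{q-1}^{\,d}| = {\rm gcd}(d,q-1)$, which accounts for the first factor in the asserted formula.

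For the wild factor $|U^{(1)}/(U^{(1)})^{d}|$ I would use that $U^{(1)}$ is a pro-$p$ group: the prime-to-$p$ part of $d$ acts invertibly, so only $p^{{\rm ord}_p(d)}$ contributes, and, viewing $U^{(1)}$ as a $\mathbb{Z}_p$-module, the $d$-th power map is multiplication by $d$. Hence $|U^{(1)}/(U^{(1)})^{d}|$ is a power of $|\mathbb{Z}_p/d\mathbb{Z}_p|$, and the exponent is read off from the filtration $U^{(i)}=1+\mathfrak{m}^{i}$ whose successive quotients are $U^{(i)}/U^{(i+1)}\cong\mathbb{F}_q$: in the stable range $i>e/(p-1)$ the $p$-th power map induces an isomorphism $U^{(i)}\xrightarrow{\sim}U^{(i+e)}$, so each factor of $p$ in $d$ shifts the filtration by exactly $v_K(p)=e$ levels, which is where the ramification index enters. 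Combining the tame and wild factors gives the displayed value of $|R^{\times}/R^{\times d}|$, and feeding it into Proposition 1 finishes the argument.

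I expect the precise exponent bookkeeping to be the main obstacle. One must track simultaneously the shift $v_K(p)=e$ of the unit filtration, the contribution of the $p$-power roots of unity sitting in the torsion of $U^{(1)}$, and the size $q$ of each graded quotient, and confirm that these assemble into exactly $|\mathbb{Z}_p/d\mathbb{Z}_p|^{e}$; this is the delicate point on which the whole corollary rests, whereas the reduction to a local index computation and the parity split are routine once Proposition 1 is in hand.
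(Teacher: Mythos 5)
Your reduction to Proposition 1, the splitting $R^{\times}\cong\mathbb{F}_q^{\times}\times(1+\pi R)$, and the tame computation $|\mathbb{F}_q^{\times}/\mathbb{F}_q^{\times d}|=\gcd(d,q-1)$ all coincide with the paper's proof. The divergence is in the wild factor: the paper disposes of it in one line by asserting the $\mathbb{Z}_p$-module decomposition $1+\pi R\cong\mathbb{Z}_p\oplus\dots\oplus\mathbb{Z}_p$ ($e$ copies), from which $|(1+\pi R)/(1+\pi R)^{d}|=|\mathbb{Z}_p/d\mathbb{Z}_p|^{e}$ is immediate, whereas you try to extract this index from the filtration $U^{(i)}=1+\pi^{i}R$ and the stable-range isomorphism $(U^{(i)})^{p}=U^{(i+e)}$ for $i>e/(p-1)$.

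That extraction is exactly where your argument has a genuine gap, and you concede as much by deferring ``the precise exponent bookkeeping''. The bookkeeping does not come out as you expect: each graded quotient $U^{(i)}/U^{(i+1)}$ has order $q=p^{f}$, not $p$, so one factor of $p$ in $d$ costs
$$\bigl[U^{(i)}:(U^{(i)})^{p}\bigr]=\bigl[U^{(i)}:U^{(i+e)}\bigr]=q^{e}=p^{ef},$$
and, writing $d=p^{k}d'$ with $p\nmid d'$ and $p^{a}$ for the number of $p$-power roots of unity in $K$, the full count is
$$\bigl|U^{(1)}/(U^{(1)})^{d}\bigr|=p^{\min(k,a)}\,\bigl|\mathbb{Z}_p/d\mathbb{Z}_p\bigr|^{ef},$$
consistent with the standard structure theorem $U^{(1)}\cong\mathbb{Z}/p^{a}\times\mathbb{Z}_p^{[K:\mathbb{Q}_p]}$, whose $\mathbb{Z}_p$-rank is $ef$, not $e$. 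So your route cannot terminate at the exponent $e$: whenever $p\mid d$ and $f>1$ the wild index is strictly larger than $|\mathbb{Z}_p/d\mathbb{Z}_p|^{e}$ (when $p\nmid d$ the wild factor is $1$ and the issue disappears). In fact your (correct) filtration facts contradict the decomposition the paper relies on, which indicates that the exponent in the corollary should be $ef=[K:\mathbb{Q}_p]$ rather than $e$, together with the extra factor $p^{\min(k,a)}$; but as submitted, your proposal neither completes the computation nor notices this mismatch, so the step on which the whole corollary rests is missing.
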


\begin{proof} We have $R^{\times}\cong \mathbb{F}_q^{\times} \times (1+\pi R)$, and
$1+\pi R \cong  \mathbb{Z}_p \oplus\dots\oplus  \mathbb{Z}_p$ with $e$ copies of $ \mathbb{Z}_p$. Hence
$$|R^{\times}/R^{\times d}|=
|\mathbb{F}_q^{\times}/\mathbb{F}_q^{\times d}|| \mathbb{Z}_p /d \mathbb{Z}_p \oplus\dots\oplus\mathbb{Z}_p /d \mathbb{Z}_p|,$$
with $e$ copies of $\mathbb{Z}_p /d \mathbb{Z}_p.$ It is well-known that
$|\mathbb{F}_q^{\times}/\mathbb{F}_q^{\times d}|= {\rm gcd}(d, q-1)$.
\end{proof}

In particular, $$u_{diag}(d,\mathbb{Z}_p)\leq d\, {\rm gcd}(d,p-1)| \mathbb{Z}_p /d \mathbb{Z}_p |,$$
 if $d$ is odd, and
 $$u_{diag}(d,\mathbb{Z}_p)\leq d\, {\rm gcd}(d,p-1)| \mathbb{Z}_p /d \mathbb{Z}_p |$$
 if $d$ is even.

\section{A theorem of Springer for higher $u$-invariants of Henselian valued fields}

\begin{theorem} (see [Sp2] for $d = 2$) Let $(k, v)$ be a discretely valued
field with valuation ring $ \mathcal{O}$, value group $\Gamma$ and residue field $\overline{k}$. Assume
 char $\overline{k} \nmid d$.
\\
(i)  $u_{diag}(d, k)\geq |\Gamma /d \Gamma| u_{diag}(d, \overline{k})$.
\\
 (ii) If $(k, v)$ is a Henselian valued field then
$$u_{diag}(d, k)=|\Gamma/d \Gamma| u_{diag}(d, \overline{k}).$$
The above (in)equalities still hold when the values are infinite.
\end{theorem}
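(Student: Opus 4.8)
The plan is to generalize the classical Springer argument for quadratic forms to forms of degree $d$, exploiting the structure of diagonal forms over a discretely valued field. The key observation is that every element of $k^\times$ can be written as $\pi^j u$ where $\pi$ is a uniformizer, $0 \le j < $ (the relevant exponent), and $u$ is a unit; modulo $d$-th powers, only the residue class of $j$ in $\Gamma/d\Gamma$ and the class of $u$ modulo $d$-th powers of units matter. So a diagonal form $\langle a_1,\dots,a_m\rangle$ decomposes, after scaling each entry by a $d$-th power, into a sum of ``blocks'' $\pi^j \psi_j$ where $j$ ranges over coset representatives of $\Gamma/d\Gamma$ and each $\psi_j$ is a diagonal unit form whose reduction $\overline{\psi_j}$ lives over $\overline{k}$.

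For the lower bound (i), I would construct a large anisotropic diagonal form explicitly. Take $|\Gamma/d\Gamma|$ coset representatives $j_1,\dots,j_r$ of $\Gamma/d\Gamma$, and for each one take an anisotropic diagonal form over $\overline k$ of dimension $u_{diag}(d,\overline k)$; lift each entry to a unit in $\mathcal O$ and form $\bigperp_{s} \pi^{j_s}\tilde\psi_s$. The anisotropy of this form follows from the valuation-theoretic principle already recorded in the excerpt after Lemma~5 (the statement that $\varphi_1 \perp t\varphi_2 \perp \cdots \perp t^{d-1}\varphi_d$ is anisotropic over $k(t)$ or $k((t))$ iff each $\varphi_i$ is anisotropic over $k$): here $\pi$ plays the role of $t$, and the distinct valuations $j_s \pmod{d}$ prevent cancellation between blocks, while anisotropy within each block reduces to anisotropy of $\overline{\psi_s}$ over $\overline k$. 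This gives $u_{diag}(d,k) \ge |\Gamma/d\Gamma|\, u_{diag}(d,\overline k)$, and the argument works in the merely discretely valued (not necessarily Henselian) case because it only requires lifting from the residue field, not Hensel's lemma.

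For the upper bound in (ii), which I expect to be \emph{the main obstacle}, I would argue in the opposite direction using the Henselian hypothesis. Given any anisotropic diagonal form over $k$, group its entries by valuation class in $\Gamma/d\Gamma$ to write it (up to $d$-th power scaling) as $\bigperp_s \pi^{j_s}\psi_s$ with $\psi_s$ a unit form. The crucial claims are: first, that each unit block $\psi_s$ must itself be anisotropic over $k$, and that its reduction $\overline{\psi_s}$ is anisotropic over $\overline k$ — this is where Hensel's lemma enters, since $\mathrm{char}\,\overline k \nmid d$ guarantees that a nontrivial zero of $\overline{\psi_s}$ with some reduced coordinate a unit lifts to a nontrivial zero of $\psi_s$ over $k$ (the relevant partial derivative is a nonzero multiple of $d$ times a unit power, hence a unit). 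Second, that no cancellation occurs across blocks with distinct valuation classes, so the dimension of $\psi_s$ is at most $u_{diag}(d,\overline k)$ and the number of blocks is at most $|\Gamma/d\Gamma|$. Combining, the total dimension is at most $|\Gamma/d\Gamma|\,u_{diag}(d,\overline k)$, giving the reverse inequality.

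The delicate points I would watch are the following. The Hensel-lifting step needs the reduced zero to have a \emph{unit} coordinate, which is why one must first normalize each block so that not all entries reduce to zero and then track which coordinates can be taken to be units; the hypothesis $\mathrm{char}\,\overline k \nmid d$ is exactly what makes the Jacobian criterion applicable to the diagonal monomial $a_i x_i^d$. The treatment of infinite values requires the conventions $\infty \cdot n = \infty$ for $n \ge 1$ and $\infty \cdot 0$ interpreted appropriately: if $u_{diag}(d,\overline k) = \infty$ the lower bound (i) already forces $u_{diag}(d,k) = \infty$, and if $|\Gamma/d\Gamma| = \infty$ one produces arbitrarily large anisotropic forms by using arbitrarily many valuation classes, so both sides are infinite consistently. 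I would handle these degenerate cases separately at the end, after establishing the finite case.
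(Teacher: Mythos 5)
Your proposal is correct, and its skeleton coincides with the paper's: both decompose a diagonal form, up to $d$-th powers, into blocks $\pi_\gamma \psi_\gamma$ indexed by coset representatives of $\Gamma/d\Gamma$ with each $\psi_\gamma$ a unit form, prove the lower bound (i) by lifting anisotropic residue forms, and prove the upper bound in (ii) by reducing each block. The genuine difference is how the two key facts are justified. The paper outsources both to Morandi's preprint [Mo]: for (i) it notes the lifted block form is anisotropic over the completion of $k$ by Morandi's Springer theorem, hence anisotropic over $k$; for (ii) it cites [Mo] for the statement that anisotropy of $\varphi$ forces every nonzero reduction $\overline{\phi_\gamma}$ to be anisotropic over $\overline{k}$. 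You instead prove both facts directly: for (i) by the valuation-counting argument (if each $\overline{\psi_\gamma}$ is anisotropic, then $v(\psi_\gamma(x)) = d \cdot \min_i v(x_i)$ for integral $x$, so the nonzero blocks take values in distinct cosets of $d\Gamma$ and cannot cancel), and for (ii) by explicit Hensel lifting, where ${\rm char}\, \overline{k} \nmid d$ makes the relevant partial derivative $d\, \overline{u}\, \overline{y}^{d-1}$ a unit so the Jacobian criterion applies. This buys self-containedness --- no appeal to the unpublished [Mo] or to passage to the completion --- at the cost of writing out arguments the paper treats as known; it also makes visible that (i) needs neither Henselianness nor the characteristic hypothesis, which the paper leaves implicit. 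One small repair: in your upper-bound step, the ``no cancellation across blocks'' claim is not needed (and the paper does not use it there); once Hensel's lemma shows each $\overline{\psi_\gamma}$ is anisotropic, the bound follows from $\dim \psi_\gamma = \dim \overline{\psi_\gamma} \leq u_{diag}(d, \overline{k})$ together with the fact that the number of blocks is at most $|\Gamma/d\Gamma|$ by construction. Cross-block non-cancellation is the content of the lower bound, not of the upper one.
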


For $d=2$, (ii) corresponds to Springer's Theorem for quadratic forms over Henselian valued fields.
For a similar result for quadratic forms, see also [Du].

\begin{proof} Choose a set $\{ \pi_{\gamma} \; \vert
\; \gamma \in I \}$ of representatives of the distinct cosets in $\Gamma
/d \Gamma$.\\
(i) Let $u=u(d, \overline{k})$. Choose an anisotropic form $\varphi$ over $\overline{k}$.  Lift $\varphi$ to a form $\tilde{\varphi}$ over $k$. The form $\phi = \bigoplus
\pi_{\gamma} \tilde{\varphi}$ is anisotropic over $k$, since it is anisotropic over
the completion of $k$ by Springer's Theorem [Mo].
\\ (ii) It remains to check that $u_{diag}(d, k) \leq |\Gamma /d \Gamma | \, u_{diag} (\overline{k})$.
Let $\varphi $ be an anisotropic
diagonal form of degree $d$ over $k$.  Decompose the $d$-linear form $\phi$ associated to $\varphi$ as $\phi = \bigoplus
\pi_{\gamma} \phi_{\gamma}$ with each $\phi_{\gamma}$ a unit form (a $d$-linear form $\theta$ with coefficients
in $\mathcal{O}$ is called a {\it unit form}
if the $d$-linear form $\bar \theta$ over $\bar k$ is nondegenerate  [Mo]). Let $m$ be the number
of those finitely many $\phi_{\gamma}$. Since $\varphi$ is anisotropic, all nonzero
$\overline{\phi_{\gamma}}$ must be anisotropic over  $ \overline{k} $ [Mo].
Therefore we conclude that $u(d, \overline{k}) \geq \dim
\overline{\phi}_{\gamma}$ for all nonzero $\overline{\phi}_{\gamma}$. Since
$\overline{\phi}_{\gamma}$ is also
a diagonal form, we have moreover that $u_{diag}(d,\overline{k}) \geq \dim
\overline{\phi_{\gamma}}$ for all nonzero $\overline{\phi}_{\gamma}$. Hence
if $u_{diag}(d, k) = \dim \varphi$, we conclude that $u_{diag}(d, k) = \sum \dim
\overline{\phi}_{\gamma} \leq m\, u (d, \overline{k})$ (resp. $\leq m\, u_{diag}(d,
\overline{k})$), where $m \leq n$ is the number of the $\phi_{\gamma}$
which are nonzero. Indeed, we have $m\leq |\Gamma/d \Gamma|$.
Thus $u_{diag}(d, k) \leq |\Gamma /d \Gamma | \, u_{diag} (\overline{k})$.
If $\Gamma /d \Gamma$ is an infinite group, we can take arbitrarily many $\pi_\gamma$ to obtain
an anisotropic form of arbitrarily large dimension by the construction above, which shows that
$u_{diag}(d, k)$ is infinite. A similar argument using a unit form which is the lift of an anisotropic form over
$\overline{k}$ shows that $u_{diag}(d, k)$ is infinite when $u_{diag}(d, \overline{k})$ is infinite.
\end{proof}

\begin{corollary} Let $k$ be a field of char $k \nmid d$.
Let $l/k$ be a field extension of finite type over $k$ of transcendence degree $n$. Then
$$u_{diag}(d, l)\geq d^n u_{diag}(d, k^{\prime})$$
for a suitable finite field extension $k^{\prime}/k$.
\end{corollary}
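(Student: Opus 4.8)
The plan is to argue by induction on the transcendence degree $n$, using the inequality of Theorem 2(i) to strip off one unit of transcendence degree at a time while converting it into a factor of $d$. For the base case $n=0$, a finitely generated extension $l/k$ of transcendence degree $0$ is a finite extension, so taking $k'=l$ gives $u_{diag}(d,l)=d^{0}\,u_{diag}(d,k')$ and there is nothing to prove.

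For the inductive step, suppose the statement holds for transcendence degree $n-1$ over every base field of characteristic not dividing $d$, and let $l/k$ have transcendence degree $n\geq 1$. First I would choose a transcendence basis $t_1,\dots,t_n$ of $l/k$, so that $l$ is a \emph{finite} extension of $F=E(t_n)$, where $E=k(t_1,\dots,t_{n-1})$. On $F$ I take the $t_n$-adic discrete valuation $v$: its value group is $\mathbb{Z}$ and its residue field is $E$, and since $v$ is trivial on $k$ we have $\operatorname{char} E=\operatorname{char} k\nmid d$. Next I would extend $v$ to a valuation $w$ on the finite extension $l/F$. Such an extension exists, it is again discrete of rank one (so its value group $\Gamma_w$ is isomorphic to $\mathbb{Z}$ and $|\Gamma_w/d\Gamma_w|=d$), and its residue field $\overline{l}$ is a \emph{finite} extension of $E$ by the fundamental inequality $f(w/v)\le[l:F]$. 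As $w$ is again trivial on $k$, we have $\operatorname{char}\overline{l}=\operatorname{char} k\nmid d$.

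Now Theorem 2(i) applied to $(l,w)$ yields
$$u_{diag}(d,l)\;\ge\;|\Gamma_w/d\Gamma_w|\;u_{diag}(d,\overline{l})\;=\;d\,u_{diag}(d,\overline{l}).$$
Since $\overline{l}$ is finite over $E=k(t_1,\dots,t_{n-1})$, it is a finitely generated extension of $k$ of transcendence degree $n-1$, so the induction hypothesis supplies a finite extension $k'/k$ with $u_{diag}(d,\overline{l})\ge d^{\,n-1}u_{diag}(d,k')$. Combining the two inequalities gives $u_{diag}(d,l)\ge d\cdot d^{\,n-1}u_{diag}(d,k')=d^{\,n}u_{diag}(d,k')$, completing the induction.

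The main obstacle is organizing the bookkeeping so that the induction hypothesis genuinely applies: one must verify that the residue field of the extended valuation is finite over the one-variable-smaller rational function field $k(t_1,\dots,t_{n-1})$ (this is exactly the finiteness of the residue degree $f(w/v)$ for a finite field extension) and that the characteristic of every residue field encountered remains prime to $d$. Once these are in place, the value group of each rank-one discrete valuation contributes the clean factor $|\mathbb{Z}/d\mathbb{Z}|=d$, and the claimed power $d^{\,n}$ accumulates over the $n$ steps. A one-shot alternative would be to equip $k(t_1,\dots,t_n)$ with the rank-$n$ lexicographic valuation (value group $\mathbb{Z}^n$, residue field $k$), extend it to $l$, and apply the inequality of Theorem 2(i) directly with $|\mathbb{Z}^n/d\mathbb{Z}^n|=d^{\,n}$; I prefer the inductive version since it stays within the rank-one setting in which Theorem 2 is phrased.
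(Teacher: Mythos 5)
Your proof is correct and is exactly the argument the paper leaves implicit: Corollary 2 is presented as a direct consequence of Theorem 2(i), obtained by iterating that inequality along (extensions of) the $t_n$-adic discrete valuations, whose residue fields lower the transcendence degree by one at each step, with the finite extension $k'/k$ emerging as the terminal residue field. Your bookkeeping (finiteness of the residue degree via the fundamental inequality, the value group staying $\mathbb{Z}$, and the residue characteristic staying equal to $\operatorname{char} k$ because the valuations are trivial on $k$) is precisely what is needed, so the proposal matches the paper's intended proof.
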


Note that for a Henselian valuation ring $R$ with residue field $\overline{k}$ and quotient field
$k$, $s_d(R)=s_d(k)$ [J2, (6.8)] and $s_d(R)=s_d(\overline{k})$ [R, 1.2] if ${\rm char}\,\overline{k}\nmid d$
 (use Hensel's Lemma).

 Let $R$ be a complete discrete valuation ring with residue field $\overline{k}$ and field of fractions
$k$. If $\overline{k}$ is algebraically closed then $k$ is $C_1$. This was proved by Lang
using the theory of Witt vectors and Witt polynomials [G, (6.25)]. Using our generalization of Springer's Theorem
 above and $u_{diag}(d, \overline{k})=1$, we calculate the (diagonal) $u$-invariant of $k$ in case
${\rm char}\, \overline{k} \nmid d$:

\begin{corollary}
(i)  Let $R$ be a complete discrete valuation ring with residue field $\overline{k}$ and field of fractions
$k$.
If $\overline{k}$ is algebraically closed and ${\rm char}\, \overline{k} \nmid d$, then
$$u_{diag}(d, k)=u(d, k)=d.$$
\noindent (ii)  Let $R$ be a complete discrete valuation ring with residue field $\overline{k}$ and field of fractions
$k$. Suppose that $R$ is unramified and that $\overline{k}$ is the algebraic closure of the field $\mathbb{F}_p$.
 Let $d$ be an integer such that $p \nmid d$. Then
 $$u_{diag}(d, k)= u (d, k)=d.$$
 (iii) Let $k_\infty$ be  the maximal unramified algebraic
 extension of $\mathbb{Q}_p$ and $d$ an integer such that $p \nmid d$. Then
 $$u_{diag}(d, k_\infty)= u (d, k_\infty)=d.$$
\end{corollary}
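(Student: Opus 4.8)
The plan is to prove all three equalities by the same sandwich. On one side, the Springer-type formula of Theorem 2 gives $u_{diag}(d,k)=d$, which already forces $u(d,k)\geq u_{diag}(d,k)=d$ by Remark 1(i). On the other side, Lang's theorem gives $u(d,k)\leq d$. Hence all three quantities coincide with $d$, and the task reduces to verifying, in each case, that the hypotheses of these two inputs are met.

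For (i), completeness of $R$ makes $(k,v)$ Henselian with value group $\Gamma=\mathbb{Z}$, so $|\Gamma/d\Gamma|=d$. Because $\overline{k}$ is algebraically closed we have $u_{diag}(d,\overline{k})=1$ by Remark 1(ii), and because ${\rm char}\,\overline{k}\nmid d$ Theorem 2(ii) gives $u_{diag}(d,k)=|\Gamma/d\Gamma|\,u_{diag}(d,\overline{k})=d$. On the other hand, $\overline{k}$ algebraically closed makes $k$ a $C_1$-field by Lang's theorem [G, (6.25)], that is $u(d,k)\leq d$; combined with $d=u_{diag}(d,k)\leq u(d,k)\leq d$ this yields $u_{diag}(d,k)=u(d,k)=d$. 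Part (ii) is then immediate: its hypotheses say that $\overline{k}=\overline{\mathbb{F}_p}$ is algebraically closed and ${\rm char}\,\overline{k}=p\nmid d$, so (i) applies verbatim. (Concretely, an unramified complete $R$ with residue field $\overline{\mathbb{F}_p}$ is the ring of Witt vectors $W(\overline{\mathbb{F}_p})$, whose fraction field is the completion $\widehat{k_\infty}$ appearing in (iii).)

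Part (iii) is the delicate one, since $k_\infty=\mathbb{Q}_p^{\rm nr}$ is not complete and Lang's theorem does not apply to it directly. First I would note that $k_\infty$ is still Henselian, being an algebraic extension of the Henselian field $\mathbb{Q}_p$, that its valuation is discrete with value group $\mathbb{Z}$ (the extension is unramified), and that its residue field is $\overline{\mathbb{F}_p}$. Theorem 2(ii) therefore applies unchanged and gives $u_{diag}(d,k_\infty)=d$, whence $u(d,k_\infty)\geq d$. For the reverse inequality I would pass to the completion $\widehat{k_\infty}$, which is precisely the complete field of part (ii) and is hence $C_1$, and then descend isotropy: a form of degree $d$ over $k_\infty$ in more than $d$ variables is isotropic over $\widehat{k_\infty}$, and isotropy of a fixed form is an existential sentence with coefficients in $k_\infty$, so it transfers back to $k_\infty$ by Ax-Kochen-Ersov [Ax-K]---here $k_\infty$ and $\widehat{k_\infty}$ share the same residue field $\overline{\mathbb{F}_p}$ and the same value group $\mathbb{Z}$, so $k_\infty$ is an elementary substructure of $\widehat{k_\infty}$. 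This makes $k_\infty$ a $C_1$-field, so $u(d,k_\infty)\leq d$, and the two bounds give $u_{diag}(d,k_\infty)=u(d,k_\infty)=d$.

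The only genuine obstacle is this descent step. Springer's theorem controls only diagonal forms, so it cannot by itself bound the full invariant $u(d,k_\infty)$, and the non-completeness of $k_\infty$ blocks a direct appeal to Lang; the transfer from $\widehat{k_\infty}$ down to $k_\infty$ is what bridges the gap. The point to check carefully is that the mixed-characteristic hypotheses of Ax-Kochen-Ersov are met, which they are because $k_\infty$ is absolutely unramified (ramification index $1$ over $\mathbb{Q}_p$); alternatively one may invoke Henselian (Greenberg-type) approximation, the valuation ring of $k_\infty$ being a Henselian discrete valuation ring with completion $W(\overline{\mathbb{F}_p})$.
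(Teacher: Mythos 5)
Your proof is correct, and for (i) and (ii) it is exactly the paper's (unstated) argument: the paper declares these parts obvious, the intended reasoning being precisely your sandwich --- Theorem 2 with $u_{diag}(d,\overline{k})=1$ gives $u_{diag}(d,k)=d$ from below, and Lang's theorem [G, (6.25)] gives $u(d,k)\leq d$ from above. For (iii) you also match the paper on the lower bound (applying Theorem 2 to the Henselian, discretely valued, non-complete field $k_\infty$ with residue field $\overline{\mathbb{F}_p}$), but you diverge on the upper bound. The paper gets $u(d,k_\infty)\leq d$ by a one-line citation of [G, (6.26)]: Lang's theorem in its original strength asserts that the maximal unramified extension $k_\infty=\mathbb{Q}_p^{nr}$ itself, not merely its completion, is a $C_1$-field, so no descent is needed. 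You, taking Lang to cover only complete fields, pass to $\widehat{k_\infty}=\mathrm{Frac}\,(W(\overline{\mathbb{F}_p}))$ and descend isotropy to $k_\infty$ by the Ax--Kochen--Ershov elementary-substructure principle for unramified Henselian fields of mixed characteristic, or alternatively by Greenberg approximation over the excellent Henselian discrete valuation ring $\mathcal{O}_{k_\infty}$. Both descents are legitimate: the hypotheses you check (henselianity, absolute unramifiedness, perfect residue field, identical value group $\mathbb{Z}$ and residue field $\overline{\mathbb{F}_p}$) are exactly what the mixed-characteristic AKE substructure theorem requires --- though that statement lives in the later Ax--Kochen/Ershov papers rather than in the paper's reference [Ax-K] --- and in the approximation route one should note, as you implicitly do, that a nontrivial zero over $\widehat{k_\infty}$ can be taken primitive and that $\pi$-adic closeness preserves primitivity, so the approximating solution over $\mathcal{O}_{k_\infty}$ is again nontrivial. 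The trade-off: the paper's proof is shorter because it leans on the harder form of Lang's theorem, while your argument shows how the non-complete case formally reduces to the complete case of (i)/(ii), at the price of invoking model theory or approximation theory.
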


\begin{proof} (i) and (ii) are obvious.\\
(iii) The fact that $u_{diag}(d, k)= d$ follows directly from (i). The valuation ring of
$k_\infty$ is a Henselian discrete valuation ring [G, p.~99], hence also $u_{diag}(d, k_\infty)= d$ by Theorem 2.
It is well-known that both $k$ and $k_\infty$ are $C_1$ [G, (6.26)]. Thus
$u_{diag}(d, k_\infty)\leq u (d, k_\infty)\leq d$ and $u_{diag}(d, k)\leq u (d, k)\leq d$
which implies the assertion.
\end{proof}

Note that in case $R$ has the same characteristic as $\overline{k}$, $R$ must be isomorphic to the power series ring
$\overline{k}[[x]]$, in which case (i) is a special case of (ii).

\begin{remark} According to MathSciNet review MR0037836 (12,315d) of [De1], Demyanov proved that
 $u(3,k)\leq 3u(3,\overline{k})$ for any field $k$ which is complete under a
discrete valuation with residue class field $\overline{k}$ of characteristic not $3$.
This result is also proved by Springer [Sp1], this time including the case that ${\rm char}\,\overline{k}$ is 3.
This could be seen as an indication for the existence of a Springer Theorem for cubic forms (or
even of forms of  degree $d$ greater than $3$)
over Henselian valued fields which are not necessarily diagonal. However, Morandi [Mo, 2.7] showed that a general Springer
Theorem does not hold by  giving an example of an isotropic $d$-linear form $\bar f$ over $\mathbb{Q}$ which has
an anisotropic lift $f$ to $\mathbb{Q}((t))$.

In case of a finite residue class field, it was moreover shown that any cubic form of dimension greater than 3
over a field $k$
which is complete under a discrete valuation is isotropic in the unramified cubic extension $l$ of $k$
[Sp1, Proposition 3].
\end{remark}

\begin{remark} Let $k$ be a $C_i$-field (or even a $C_i^p$-field) and let $K=k((x))$ be
the field of formal Laurent series in $x$ over $k$. Then $K$ is a $C_{i+1}$-field [Pf1, Chapter 5, 2.2]
(or even a $C_{i+1}^p$-field as defined in Pfister [Pf1, Chapter 5]).

As observed in [Pf1, p.~111] for $d=2$ we can deduce from our generalization of Springer's
 Theorem even without Tsen-Lang theory that for instance
the iterated power series field $K=k((x_1))\dots((x_n))$ over  a field $k$ of char $k \nmid d$, $1\leq n \leq \infty$,
has $u_{diag}(d, K)= d^n u_{diag}(d, k)$.
In particular if $k$ is algebraically closed, we have $u_{diag}(d, K)=d^n$.
\end{remark}

\section{Some $u$-invariants of finite and $p$-adic fields}

\begin{remark}
(i) If $k$ is a $p$-adic field (e.g. a finite field extension of $\mathbb{Q}_p$, or of
$\mathbb{F}_q((x))$) such that ${\rm char}\, \overline{k} \nmid d$, then by Springer's Theorem
$$u_{diag}(d, k)=d \, u_{diag}(d, \overline{k}).$$
 \\(ii) We know that
$$u(d,\mathbb{Q}_p)\geq d^2$$
by Example 1. If $p \equiv 1 \,{\rm mod}\, 4$, then $-1\in \mathbb{Q}_p^{\times 2}$, therefore $s_d( \mathbb{Q}_p)=1$
for any integer of the type $d=4m+2$.
\end{remark}

 Let $k$ be a finite field extension of $\mathbb{Q}_p$ with residue class field
$\overline{k}=\mathbb{F}_q$. (Note that $s_d(R)=s_d(k)$ [J2] and that
$s_d(k)=s_d(\mathbb{F}_q)$ for $p\nmid d$ [R].)
 We now assume that ${\rm char}\, \mathbb{F}_q=p \nmid d$ to be able to apply Springer's Theorem for forms of higher degree,
which yields $u_{diag}(d, k)=d \, u_{diag}(d, \mathbb{F}_q)$.

 Let $d^*={\rm gcd}(d,q-1)$. Then $|\mathbb{F}_q^{\times}/\mathbb{F}_q^{\times d}|=
|\mathbb{F}_q^{\times}/\mathbb{F}_q^{\times d^*}|$ so that $u_{diag}(d,\mathbb{F}_q)=u_{diag}(d^*,\mathbb{F}_q)$.
 Kneser's Theorem for forms of higher degree together with the above yields
$$d\leq u_{diag}(d, k)=d \, u_{diag}(d, \mathbb{F}_q) \leq d d^*.$$

In particular, the inequality
$$u_{diag}(d, k)=d \, u_{diag}(d, \mathbb{F}_q)\leq d^2$$
is recovered if ${\rm char}\, \mathbb{F}_q=p \nmid d$.
This has been known for some time, see [A].

On the other hand, it also follows that
$$(1)\,\,\, d\,s_d(\mathbb{F}_q)\leq u_{diag}(d, k)=d \, u_{diag}(d, \mathbb{F}_q) \leq d d^*.$$
For odd integers $d$ we get in particular
$$(2)\,\,\, d \leq u_{diag}(d, k)=d \, u_{diag}(d, \mathbb{F}_q) \leq d d^*.$$

If $d$ is relatively prime to both $p$ and $q-1$, then
$$d\leq u_{diag}(d, k)=d u_{diag}(d, \mathbb{F}_q) \leq d d^*= d.$$
This yields
$$ (1')\,\,\, d\,s_d(\mathbb{F}_q)\leq u_{diag}(d, k)=d u_{diag}(d, \mathbb{F}_q) = d$$
 and proves

\begin{proposition} If $d$ is relatively prime to both $p$ and $q-1$, then
$$s_d(k)\leq u_{diag}(d, k)=d.$$
\end{proposition}

As far as the author knows, this result has not been previously mentioned in the literature. Some special
cases of this result are for instance the following:

\begin{example} (i) If $d$ is an odd integer which is not a multiple of $3$, then
$u_{diag}(d,\mathbb{Q}_3)=d.$
\\(ii) If $d$ is an odd integer which is not a multiple of $5$, then
$u_{diag}(d,\mathbb{Q}_5)=d.$
\\(iii) If $d$ is an odd integer which is not a multiple of $7$ or of $3$, then
$u_{diag}(d,\mathbb{Q}_7)=d.$
\end{example}

If $d\geq 4$ and $-1\in \mathbb{F}_q$ then $u_{diag}(d, \mathbb{F}_q)\leq d-1$ by [O], hence
$$u_{diag}(d, k)=d \,u_{diag}(d, \mathbb{F}_q) \leq d (d-1)=d^2-d.$$
Moreover, $s_{p-1}(\mathbb{F}_p)=p-1$ for any
$p\not=2$ [Ti] which implies
$u_{diag}(p-1, \mathbb{F}_p)=p-1$, since
$p-1\leq u_{diag}(p-1, \mathbb{F}_p)\leq {\rm gcd}(p-1,p-1)=p-1$. Thus
$$(3)\,\,\, u_{diag}(p-1, \mathbb{Q}_p)=(p-1)^2.$$
So the upper bound $u_{diag}(d, \mathbb{Q}_p)\leq d^2$ given by Joly [J2, p.~97] for any
$p\not=2$ is best possible. Furthermore,
$$(p-1)^2 \leq u(p-1, \mathbb{Q}_p).$$
Using this, the results of [P-A-R] on the $d^{th}$ level of a finite field for $d=4,6,8$ and $10$,
as well as the generalization of Kneser's Theorem to forms of higher degree
we compute higher $u$-invariants of finite and $p$-adic fields for even $d$:

\begin{example}
(i) $ u_{diag}(4, \mathbb{F}_5)=4$ by [P-A-R] or [PR], hence $u_{diag}(4, \mathbb{Q}_5)=16$. The latter result
follows also from (3) above. By Chevalley, it is clear that $ u(4, \mathbb{F}_5)=4$.\\
(ii) $ u_{diag}(4, \mathbb{F}_{25})\in \{ 3,4\}$, hence $u_{diag}(4, \mathbb{Q}_{25})\in \{ 12,16\}$.\\
(iii)
$s_4 (\mathbb{F}_{29})=3 \leq u_{diag}(4, \mathbb{F}_{29})\leq {\rm gcd}(4,28)=4$ [B-C, p.~434], hence
$ u_{diag}(4, \mathbb{F}_{29})\in \{ 3,4\}$, hence $u_{diag}(4, \mathbb{Q}_{29})\in \{ 12,16\}$.\\
(v) $ u_{diag}(4, \mathbb{F}_7)=2$, hence $u_{diag}(4, \mathbb{Q}_5)=8$.\\
(iv) $ u_{diag}(6, \mathbb{F}_7)=6$, hence $u_{diag}(6, \mathbb{Q}_7)=36$.
By Chevalley, we get also $ u(6, \mathbb{F}_7)=6$.\\
(vi) If $q$ is odd and $q \equiv 5 \, {\rm mod}\, 6$, then $ u_{diag}(6, \mathbb{F}_{q})=2$. In particular,
$ u_{diag}(6, \mathbb{F}_{11})=2$, hence $u_{diag}(6, \mathbb{Q}_{11})=12$.\\
(vii) If $p\in \{31, 67, 79, 139, 223\}$, then $ u_{diag}(6, \mathbb{F}_p)\in \{3,4,5,6 \}$  [B-C, p.~434], hence
$u_{diag}(6, \mathbb{Q}_p)\in \{18,24,30,36 \}$.
\end{example}

\begin{remark} Since
$s_2(k)\leq s_4(k)\leq \dots\leq s_{2^r}(k)$
for any field $k$, it follows that $s_4(\mathbb{F}_{29})=3\leq s_8(\mathbb{F}_{29})\leq {\rm gcd}\, (8,28)=4$,
that is $s_8(\mathbb{F}_{29})\in \{3,4\}$. Therefore, there is an error in the calculation of
$s_8(\mathbb{F}_{29})$ in [P-R]. Another error in the same paper was corrected in [B-C], who proved that
$s_6(\mathbb{F}_{31})=4$ and not equal to $3$ as claimed in [P-R].
\end{remark}

 Let $k_d$ be the set of elements of $k$ which are sums of $d^{th}$ powers. Any element in a finite field $\mathbb{F}_q$ which is a sum of $d^{th}$ powers must be a sum of
$d$ $d^{th}$ powers (Tornheim's Theorem [Sm2, 3.16]). As soon as $q$ is ``large enough'' with respect to the exponent
$d$, every element in $\mathbb{F}_q$ is a sum of two $d^{th}$ powers [Sm2, 6.12]. For instance,
every element in $\mathbb{F}_q$ is a sum of two $4^{th}$ powers provided $q>41$. For $q>(d^*-1)^2$
every element of $\mathbb{F}_q$ is a sum of two $d^{th}$ powers [Sm2, p.~148]. The proofs given in [Sm2] show
that the form $n\times\langle  1\rangle  = \langle  1,\dots,1\rangle  $ becomes universal for some $n$ under the given assumptions.
Indeed, if $q > (d^*-1)^4$  then $u_{diag}(d, \mathbb{F}_q)=2$ [Sm1].

Let $k$ be an arbitrary field. It is clear that every element in $k$ can be written as a sum of $n$ $d^{th}$ powers provided
the form $n\times \langle  1\rangle  =\langle  1,\dots,1\rangle  $ of degree $d$ is universal for some $n$. For a field $k$ of finite $d^{th}$ level
$s=s_d(k)$, the form $s\times \langle  1\rangle  =\langle 1,\dots,1\rangle  $ of degree $d$ is anisotropic over $k$.
This immediately leads to the following result:

\begin{proposition} Let $k$ be a field such that $s=s_d(k)=u_{diag}(d,k)$. Then
every element in $k$ can be written as a sum of $s$ $d^{th}$ powers.
\end{proposition}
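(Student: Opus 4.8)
The plan is to apply the universality criterion of Lemma 2(ii) to the single distinguished form $s \times \langle 1\rangle = \langle 1,\dots,1\rangle$ of degree $d$, whose diagonal consists of exactly $s$ ones, and then to unwind what universality says in this case.

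First I would record why this form is anisotropic. By the definition of the $d^{th}$ level, $s = s_d(k)$ means that $-1$ is a sum of $s$ but not of fewer than $s$ $d^{th}$ powers. If $\langle 1,\dots,1\rangle$ (with $s$ entries) were isotropic, there would exist $a_1,\dots,a_s \in k$, not all zero, with $a_1^d + \dots + a_s^d = 0$; choosing $a_j \neq 0$ and dividing through by $a_j^d$ would exhibit $-1$ as a sum of $s-1$ $d^{th}$ powers, contradicting $s_d(k) = s$. Hence $s \times \langle 1\rangle$ is anisotropic. This is precisely the observation already recorded in the paragraph preceding the statement, so I can either cite it or reproduce this one-line argument.

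Next I would invoke the hypothesis $s = u_{diag}(d,k)$ together with Lemma 2(ii). The form $s \times \langle 1\rangle$ is a diagonal anisotropic form of degree $d$ whose dimension equals $u_{diag}(d,k)$ exactly; Lemma 2(ii) then forces it to be universal, i.e. $D(s \times \langle 1\rangle) = k^\times$. Universality says precisely that every $c \in k^\times$ can be written as $b_1^d + \dots + b_s^d$ for some $b_i \in k$, a sum of $s$ $d^{th}$ powers. Since $0 = 0^d + \dots + 0^d$ is trivially such a sum, every element of $k$ is a sum of $s$ $d^{th}$ powers, which is the claim.

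The argument is short and I do not anticipate a genuine obstacle: the only point requiring care is checking that $s \times \langle 1\rangle$ is anisotropic, which is immediate from the definition of $s_d(k)$, and everything else is a direct citation of Lemma 2(ii). The one subtlety worth flagging is that Lemma 2(ii) concludes universality for a diagonal anisotropic form of dimension \emph{equal} to $u_{diag}(d,k)$, not merely bounded by it; one must therefore read the dimension $s$ as genuinely equal to $u_{diag}(d,k)$, which is exactly what the double equality $s = s_d(k) = u_{diag}(d,k)$ in the hypothesis supplies.
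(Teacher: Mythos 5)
Your proof is correct and is essentially the paper's own argument: the paper likewise observes (in the paragraph preceding the Proposition) that $s\times\langle 1\rangle$ is anisotropic by minimality of the level, then applies Lemma 2(ii) with $s=u_{diag}(d,k)$ to conclude this form is universal, hence every element is a sum of $s$ $d^{th}$ powers. Your extra care about the dimension being exactly $u_{diag}(d,k)$ and about the element $0$ only makes explicit what the paper leaves implicit.
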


\section{Isotropy of forms of higher degree over $p$-adic rational function fields }

Let $d$ be a fixed exponent.
We have both
 $$u_{diag}(d,\mathbb{Q}_p(t_1,\dots,t_n))\geq d^n \, u_{diag}(d,\mathbb{Q}_p)$$
and
$$u(d,\mathbb{Q}_p(t_1,\dots,t_n))\geq d^n \, u(d,\mathbb{Q}_p)$$
 (Proposition 1).
Choose $\mathbb{Q}_p$ such that $p \nmid d$, then $u_{diag}(d,\mathbb{Q}_p)=d\, u_{diag}(d,\mathbb{F}_p)$
by Springer's Theorem. Put $m_p= u_{diag}(d,\mathbb{F}_p)$, then
$$u_{diag}(d,\mathbb{Q}_p(t_1,\dots,t_n)) \geq d^{n+1}\, u_{diag}(d,\mathbb{F}_p)=d^{n+1} m_p$$
 for all $p$ with $p \nmid d$ (i.e for almost all primes).
Analogous to the case where $d=2$, one may conjecture that for all primes $p$ with $p \nmid d$,
$$u_{diag}(d,\mathbb{Q}_p(t_1,\dots,t_n))=d^{n+1} m_p.$$
Note that $ u_{diag}(d,\mathbb{F}_p)\leq d$ and $ u(d,\mathbb{F}_p)\leq d$ by Chevalley [S], therefore
$u_{diag}(d,\mathbb{Q}_p(t_1,\dots,t_n))=d^{n+1} m_p$ may take any of the following values: $d^{n+1},\,
2d^{n+1},\dots,(d-1)d^{n+1},\, d^{n+2}$. However, since $p \nmid d$ there is
some additional restriction on the possible values of $m_p$ because of $m_p\leq {\rm gcd}(d,p-1)$.

Let $K=k(t_1,\dots,t_n)$, $k$ a field. For any integer $s$, define $M_s(K)$ to be the set consisting of all (not necessarily nondegenerate)
forms $\varphi$ of degree $d$ over $K$ such that $\varphi$ is isometric to a form of degree $d$
 whose coefficients are
polynomials in $k[t_1,\dots,t_n]$ whose total degrees are bounded by s. For each form $\varphi$
over $K$ there exists an integer $s$ such that $\varphi \in  M_s(K)$.
Put
$$\mu_s(K) = {\rm sup}\{{\rm dim}(q) | q \in M_s(K) \text{ and $\varphi$ is anisotropic over K}\}.$$
Then $\mu_s(K)\leq \mu_{s+1}(K)$ and
${\rm sup}\,\mu_s(K)=u(d,K)$.

We are going to prove the following generalization of [Z, Corollary]:

\begin{theorem} Let $\varphi$ be a form of degree $d$ over $\mathbb{Q}(t_1,\dots,t_n)$ of dimension greater than
$d^{n+2}$. Then $\varphi$ is isotropic over the field $\mathbb{Q}_p(t_1,\dots,t_n)$ for almost all primes $p$.
\end{theorem}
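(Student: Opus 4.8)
The plan is to follow Zahidi's strategy: express the desired isotropy as a first-order property of the base field $\mathbb{Q}_p$, transfer it through the Ax--Kochen--Ershov principle to the equal-characteristic local field $\mathbb{F}_p((x))$, and there deduce it from Tsen--Lang theory. First I would reduce to forms of bounded complexity. Clearing denominators and scaling the coefficients of $\varphi$, I may assume they lie in $\mathbb{Z}[t_1,\dots,t_n]$ and have total degree at most $s$ for some $s$ depending only on $\varphi$; multiplying a form by a fixed nonzero polynomial in the $t_i$ changes neither its degree, its dimension, nor its isotropy over any of the fields considered. Hence $\varphi \in M_s(\mathbb{Q}_p(t_1,\dots,t_n))$ with one and the same $s$ for every prime $p$, and it suffices to show that $\mu_s(\mathbb{Q}_p(t_1,\dots,t_n)) \le d^{n+2}$ for almost all $p$, i.e. that every form in $M_s$ of dimension greater than $d^{n+2}$ is isotropic over $\mathbb{Q}_p(t_1,\dots,t_n)$.

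The next step is to encode this last property as a single first-order sentence about the base field. A form $f \in M_s$ of a given dimension is determined by finitely many elements of the base field (the coefficients of its defining polynomials in $t_1,\dots,t_n$), so the only obstruction to first-order expressibility is the a priori unbounded complexity of an isotropic vector. The key input here is a uniform degree bound, extracted from the explicit Tsen--Lang construction: there is an integer $B = B(d,n,s)$, independent of the base field $F$, such that over any $C_2$-field $F$ every form in $M_s$ of dimension greater than $d^{n+2}$ is already isotropic over $F(t_1,\dots,t_n)$ by a vector whose entries are polynomials in $t_1,\dots,t_n$ of total degree at most $B$. With such a $B$ in hand, the statement ``every $f \in M_s$ of dimension greater than $d^{n+2}$ has a nontrivial zero with polynomial entries of total degree at most $B$'' becomes a genuine first-order sentence $\sigma$ in the language of fields: one quantifies universally over the finitely many coefficients of $f$ and existentially over the finitely many coefficients of the candidate entries, while $f(x)=0$ is the simultaneous vanishing of the finitely many coefficients of a polynomial identity in the $t_i$. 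In particular $\sigma$ implies the isotropy we are after.

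I would then verify $\sigma$ over $\mathbb{F}_p((x))$, uniformly in $p$. Since $\mathbb{F}_p$ is $C_1$, the Laurent series field $\mathbb{F}_p((x))$ is $C_2$ for all degrees, and therefore $\mathbb{F}_p((x))(t_1,\dots,t_n)$ is $C_{n+2}$; consequently every form of dimension greater than $d^{n+2}$ over this function field is isotropic, and the explicit construction produces an isotropic vector of total degree at most the uniform $B$, with the same $B$ for all $p$. Thus $\mathbb{F}_p((x)) \models \sigma$ for every prime $p$. The Ax--Kochen--Ershov theorem now provides a finite set of exceptional primes outside of which $\mathbb{Q}_p \models \sigma$ if and only if $\mathbb{F}_p((x)) \models \sigma$; hence $\mathbb{Q}_p \models \sigma$ for almost all $p$. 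Unwinding $\sigma$ and recalling that $\varphi$ lies in $M_s$ and has dimension greater than $d^{n+2}$, we conclude that $\varphi$ is isotropic over $\mathbb{Q}_p(t_1,\dots,t_n)$ for all but finitely many $p$.

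The step I expect to be the main obstacle is the uniform degree bound $B$. One must run the $n$-fold Tsen--Lang induction over a $C_2$-field, at each stage passing from a single form over the function field to a system of forms of the same degree over the preceding field and solving that system by the $C_{2+j}$-property, while keeping the bound on the degrees of the solution polynomials a function of $d$, $n$ and $s$ alone. Ensuring that none of these bounds, nor the passage from single forms to systems, depends on the particular coefficient field is exactly what allows a single sentence $\sigma$ to work simultaneously for all the fields $\mathbb{F}_p((x))$, and hence to transfer to all the $\mathbb{Q}_p$.
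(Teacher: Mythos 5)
Your proposal is correct and follows essentially the same route as the paper: reduction to the classes $M_s$ and the invariants $\mu_s$, a uniform degree bound for isotropic vectors over $\mathbb{F}_p((t))(t_1,\dots,t_n)$ coming from Tsen--Lang theory (the paper's analogue of [Z, 3.1]), encoding of this bounded-degree isotropy statement as a first-order sentence about the base field (the analogue of [Z, 3.2]), and transfer from $\mathbb{F}_p((t))$ to $\mathbb{Q}_p$ for almost all $p$ via Ax--Kochen. Your closing remark correctly isolates the same technical crux that the paper also leaves implicit, namely that the Tsen--Lang induction can be made effective with bounds depending only on $d$, $n$ and $s$.
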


This follows directly from the generalization of Zahidi's Main Theorem [Z]:

\begin{theorem} For any positive integers $n$, $s$ there exists a finite set of primes $P(n, s)$
such that $\mu_s(\mathbb{Q}_p(t_1,\dots,t_n))  \leq d^{n+2}$ for all primes $p$ with $ p\not\in  P(n, s)$.
\end{theorem}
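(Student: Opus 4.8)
The plan is to reduce the statement to a single sentence in the language of valued fields and then transfer it from $\mathbb{F}_p((t))$ to $\mathbb{Q}_p$ by the Ax--Kochen--Ershov theorem, exactly as in [Z] but with the degree $2$ replaced by a general $d$. First I would reduce to a fixed dimension. If $\varphi(x_1,\dots,x_m)$ is anisotropic over $K$, then so is the subform $\varphi(x_1,\dots,x_{m-1},0)$, whose coefficients form a subset of those of $\varphi$ and hence still lie in $M_s(K)$; consequently $\mu_s(K)\le d^{n+2}$ holds as soon as there is no anisotropic form in $M_s(K)$ of dimension exactly $m:=d^{n+2}+1$. Thus it suffices to prove that for almost all $p$, every form of degree $d$ and dimension $m$ over $\mathbb{Q}_p(t_1,\dots,t_n)$ whose coefficients are polynomials in $t_1,\dots,t_n$ of total degree $\le s$ is isotropic. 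Such a form is determined by a finite tuple $\bar a$ of elements of the base field, namely one coefficient for each of the finitely many degree-$d$ monomials in $x_1,\dots,x_m$ and each of the finitely many monomials in $t_1,\dots,t_n$ of total degree $\le s$.

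The model-theoretic engine is the following uniform definability statement, which is the heart of Zahidi's method [Z]: for fixed $d,n,s,m$ there is a single first-order formula $\Psi(\bar a)$ in the language of valued fields such that, for the base field $F$ equal to $\mathbb{Q}_p$ or to $\mathbb{F}_p((t))$ and almost all $p$, $\Psi(\bar a)$ holds precisely when the form with coefficient tuple $\bar a$ is isotropic over $F(t_1,\dots,t_n)$. Granting this, the assertion that every form in $M_s$ of dimension $m$ is isotropic becomes a single sentence $\sigma:=\forall\,\bar a\,\Psi(\bar a)$ about the base valued field. By the Ax--Kochen--Ershov transfer theorem [Ax-K] there is a finite set $P(n,s)$ of primes such that $\mathbb{Q}_p\models\sigma$ if and only if $\mathbb{F}_p((t))\models\sigma$ for every $p\notin P(n,s)$; this finite set, which also absorbs the finitely many primes excluded by the uniform definability step, is the set named in the statement.

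It then remains to verify $\sigma$ over $\mathbb{F}_p((t))$, that is, that every form of degree $d$ and dimension $m$ over $\mathbb{F}_p((t))(t_1,\dots,t_n)$ is isotropic. This is a pure Tsen--Lang computation requiring no restriction on $p$: the finite field $\mathbb{F}_p$ is $C_1$ by Chevalley [S, p.~97], so the Laurent series field $\mathbb{F}_p((t))$ is $C_2$, and adjoining the $n$ rational variables raises the $C$-level by $n$, so $\mathbb{F}_p((t))(t_1,\dots,t_n)$ is a $C_{n+2}$-field [S, p.~97]. Hence $u(d,\mathbb{F}_p((t))(t_1,\dots,t_n))\le d^{n+2}<m$, so every form of dimension $m$ there is isotropic and $\mathbb{F}_p((t))\models\sigma$. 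Transferring back yields $\mathbb{Q}_p\models\sigma$ for all $p\notin P(n,s)$, which is exactly the bound $\mu_s(\mathbb{Q}_p(t_1,\dots,t_n))\le d^{n+2}$.

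The main obstacle is the uniform definability of isotropy used above: isotropy over $F(t_1,\dots,t_n)$ asserts the existence of a nontrivial zero whose coordinates are rational functions of a priori unbounded complexity, and to encode this in one formula $\Psi$ valid across all the fields $\mathbb{Q}_p$ and $\mathbb{F}_p((t))$ one needs a bound $B=B(d,n,s,m)$, independent of $p$, on the total degrees of polynomial coordinates of an isotropic vector whenever such a vector exists. Producing such a uniform bound (via elimination theory and a Bezout-type estimate for the fixed-complexity hypersurface $\varphi=0$) is precisely the technical core inherited from [Z]. The point of the present generalization is that this bounding argument, together with the parametrization by finitely many coefficients, depends only on the fixed data $d,n,s,m$ and nowhere on the degree being $2$, so Zahidi's proof carries over essentially verbatim with $2^{n+2}$ replaced by $d^{n+2}$.
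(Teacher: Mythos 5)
Your proposal is correct and takes essentially the same route as the paper: the paper likewise rests on a uniform degree bound $b(n,s)$ for isotropic vectors of forms of dimension $d^{n+2}+1$ over $\mathbb{F}_p((t))(t_1,\dots,t_n)$ (Lemma~8, whose proof is deferred to Zahidi's $d=2$ argument exactly as you defer yours), encodes bounded-degree isotropy as a first-order sentence $\Delta_m$ (Lemma~9), verifies it over $\mathbb{F}_p((t))$ via Tsen--Lang, and transfers to $\mathbb{Q}_p$ by Ax--Kochen, absorbing the exceptional primes (e.g.\ $p\mid d$) into $P(n,s)$. The only cosmetic differences are that you make the subform reduction to dimension exactly $d^{n+2}+1$ explicit where the paper leaves it implicit, and you state the sentence in the language of valued fields where the paper uses the language of fields.
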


As observed in [Z] for $d=2$, the theorem does not imply that the $u$-invariant of a
$p$-adic function field is finite for almost all primes $p$.
We have
$$u_{diag}(d,\mathbb{F}_p((t_1))\dots((t_n)))= d^n \, u_{diag}(d,\mathbb{F}_p)=d^n m_p$$
 by Springer's Theorem for all $p$ with $p \nmid d$ and
$$u(d,\mathbb{F}_p((t_1))\dots((t_n)))\leq d^{n+1} $$
 by Tsen-Lang theory for
all $p$ [G, (4.8)]. Since $u_{diag}(d,\mathbb{F}_p)\leq d$ by Chevalley,
$$u_{diag}(d,\mathbb{F}_p((t_1))\dots((t_n)))=d^n m_p\leq d^{n+1} $$
for
all $p$ with $p \nmid d$.

\begin{lemma}(for $d=2$, see [Z, 3.1]) Let $n,m$ and $s$ be integers greater than zero and let $m > d^{n+2}$. Then there
exists a positive integer $b(n, s)$ such that for any prime $p$ with $p \nmid d$ the following holds:
every form of degree $d$ and of dimension $m$ which is contained in
$M_s(\mathbb{F}_p((t))(t_1,\dots,t_n))$ has an
isotropic vector $v = (v_1, \dots, v_m)$ such that $v_i\in \mathbb{F}_p((t))[t_1,\dots,t_n]$ and ${\rm deg}(v_i) \leq b(n, d)$.
\end{lemma}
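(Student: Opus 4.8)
The plan is to deduce existence of an isotropic vector from general position ($C$-field) theory and to extract a uniform degree bound from a careful induction on $n$. The key observation is that the base field $\mathbb{F}_p((t))$ is $C_2$ for \emph{every} prime $p$: indeed $\mathbb{F}_p$ is $C_1$ by Chevalley, and passing to a field of Laurent series raises the $C$-level by one, so $u(d,\mathbb{F}_p((t)))\leq d^2$ with a bound independent of $p$. Iterating the fact that $k(t)$ is $C_{i+1}$ then shows $\mathbb{F}_p((t))(t_1,\dots,t_n)$ is $C_{n+2}$, which already yields an isotropic vector as soon as $\dim\varphi = m > d^{n+2}$. The entire difficulty is thus to produce such a vector with a degree bound in $t_1,\dots,t_n$ that is uniform in $p$.

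To run the induction cleanly I would strengthen to systems. Call $P(n)$ the statement: for all $r,s$ there is a constant $b(n,s,r)$, depending only on $n,s,r$ and the fixed $d$ (and not on $p$), such that for every $p$ with $p\nmid d$, any system $g_1,\dots,g_r$ of forms of degree $d$ in $M > r\,d^{n+2}$ variables over $\mathbb{F}_p((t))(t_1,\dots,t_n)$ whose coefficients lie in $\mathbb{F}_p((t))[t_1,\dots,t_n]$ with total degree $\leq s$, admits a nontrivial common zero with entries in $\mathbb{F}_p((t))[t_1,\dots,t_n]$ of degree $\leq b(n,s,r)$. The lemma is the case $r=1$. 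For the base case $n=0$ the field is $\mathbb{F}_p((t))$, which is $C_2$; since a $C_i$-field satisfies the corresponding property for systems (Lang--Nagata, [G]), $M > r\,d^2$ forces a common zero, whose entries already lie in $\mathbb{F}_p((t))$, so $b(0,s,r)=0$ works uniformly in $p$.

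For the inductive step I would view $\mathbb{F}_p((t))(t_1,\dots,t_n)$ as $K(t_n)$ with $K = \mathbb{F}_p((t))(t_1,\dots,t_{n-1})$ and seek a solution in which each variable is a polynomial $x_j = \sum_{a=0}^\ell x_{j,a}t_n^{a}$ of degree $\leq \ell$ in $t_n$, with $x_{j,a}\in K$. Substituting into $g_1,\dots,g_r$ and collecting powers of $t_n$ produces a system of $r(d\ell + s + 1)$ forms of degree $d$ in the $M(\ell+1)$ unknowns $x_{j,a}$ over $K$, with coefficients of total degree $\leq s$ in $t_1,\dots,t_{n-1}$. Because $M - r\,d^{n+2}\geq 1$, one may take $\ell = r(s+1)d^{n+1}$, which depends only on $n,s,r,d$, and then
$$M(\ell+1) > r(d\ell+s+1)\,d^{(n-1)+2},$$
which is precisely the hypothesis needed to apply $P(n-1)$ to the new system over $K$. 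That yields a nontrivial common zero $\{x_{j,a}\}$ with entries of degree $\leq b(n-1,s,r(d\ell+s+1))$ in $t_1,\dots,t_{n-1}$, and reassembling $x_j = \sum_a x_{j,a}t_n^a$ gives a nontrivial common zero of $g_1,\dots,g_r$ over $\mathbb{F}_p((t))(t_1,\dots,t_n)$ of total degree at most $b(n-1,s,r(d\ell+s+1)) + \ell =: b(n,s,r)$.

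The main obstacle is not existence, which is immediate from $C_{n+2}$, but the simultaneous control of the three combinatorial parameters through the recursion: the auxiliary degree $\ell$, the number of equations $r(d\ell+s+1)$, and the coefficient degree $s$, all of which must remain bounded independently of $p$. The crucial slack is the strict inequality $M > r\,d^{n+2}$, which lets $\ell$ be chosen as a fixed function of $n,s,r,d$; the uniformity in $p$ then follows because every ingredient --- the $C_2$-property of $\mathbb{F}_p((t))$, the counting of monomials, and the recursion defining $b$ --- is purely combinatorial and insensitive to $p$.
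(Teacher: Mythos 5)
Your proof is correct and follows essentially the same route as the paper, which simply defers to Zahidi's argument for $d=2$: the effective form of the Tsen--Lang/Lang--Nagata induction, strengthened to systems of forms of equal degree $d$, with the base case supplied by the $C_2$-property of $\mathbb{F}_p((t))$, which holds uniformly in $p$. Your explicit bookkeeping of the auxiliary degree $\ell$, the number of equations $r(d\ell+s+1)$, and the recursion defining $b(n,s,r)$ is precisely the content hidden behind the paper's phrase ``analogous to the one given for $d=2$ by Zahidi.''
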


 The proof is analogous to the one given for $d=2$ by Zahidi.
  Given a form of degree $d$
over $\mathbb{F}_p((t))(t_1,\dots,t_n)$ this allows us to bound the degree of the entries of an isotropic vector as described
above in terms of the degrees of the coefficients of the given form.

\begin{lemma}(for $d=2$, see [Z, 3.2])
 Let $m = (m_1,m_2,m_3,m_4)$ be a quadruple of positive integers. There exists a first
order sentence $\Delta_m$ in the language of fields such that for every field $k$
 the following holds:
 every form $\varphi$ of degree $d$ in $M_{m_2}(k(x_1, \dots, x_{m_1}))$ of dimension $m_3$
has an isotropic vector of degree bounded by $m_4$ if and only if $k \models \Delta_m $.
\end{lemma}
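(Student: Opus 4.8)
The plan is to reduce the stated property of the field $k$ to the satisfaction of a single first-order sentence by \emph{coding} both the forms in question and their candidate isotropic vectors as finite tuples of elements of $k$. The key observation is that every polynomial of total degree at most $D$ in the variables $x_1,\dots,x_{m_1}$ is determined by its finitely many coefficients, and the number $N(D)=\binom{m_1+D}{m_1}$ of monomials of degree $\le D$ depends only on $D$ and $m_1$, not on $k$. Hence a form in $M_{m_2}(k(x_1,\dots,x_{m_1}))$ of degree $d$ and dimension $m_3$ --- represented, as we may since isotropy is an isometry invariant and each isometry class in $M_{m_2}$ has a polynomial-coefficient representative, by a form whose $\binom{m_3+d-1}{d}$ coefficients are polynomials of degree $\le m_2$ in the $x_j$ --- is encoded by a tuple $\mathbf a=(a_1,\dots,a_A)\in k^A$ with $A=\binom{m_3+d-1}{d}\,N(m_2)$. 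Likewise a candidate isotropic vector $v=(v_1,\dots,v_{m_3})$ with $\deg v_i\le m_4$ is encoded by a tuple $\mathbf b=(b_1,\dots,b_B)\in k^B$, where $B=m_3\,N(m_4)$. All of these cardinalities are determined by $m$ and the fixed $d$ alone.

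Next I would express the two defining conditions on $v$ as quantifier-free formulas in $\mathbf a,\mathbf b$. Substituting the encoded $v_i$ into the encoded $\varphi$ and expanding, $\varphi(v)$ is a polynomial in $x_1,\dots,x_{m_1}$ of total degree at most $d\,m_4+m_2$; its coefficients, indexed by the $N(d\,m_4+m_2)$ monomials of that degree, are fixed polynomials $P_j(\mathbf a,\mathbf b)$ with integer coefficients, obtained by the purely formal expand-and-collect procedure. The condition ``$\varphi(v)=0$ in $k(x_1,\dots,x_{m_1})$'' is then the finite conjunction $\bigwedge_j P_j(\mathbf a,\mathbf b)=0$, and the condition ``$v\neq 0$'' is the finite disjunction $\bigvee_i b_i\neq 0$; both are quantifier-free. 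Setting
\[
\Delta_m \;:=\; \forall a_1\cdots\forall a_A\;\exists b_1\cdots\exists b_B\;\Big(\textstyle\bigwedge_j P_j(\mathbf a,\mathbf b)=0\ \wedge\ \bigvee_i b_i\neq 0\Big)
\]
gives a first-order sentence in the language of fields whose satisfaction in $k$ is, by construction, equivalent to the statement that every form of degree $d$ in $M_{m_2}(k(x_1,\dots,x_{m_1}))$ of dimension $m_3$ has a nonzero isotropic vector with entries of degree $\le m_4$. This is exactly the asserted property.

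\textbf{The main obstacle}, and the only point requiring genuine care, is the \emph{uniformity across all fields $k$}: one and the same sentence $\Delta_m$ must work for every $k$, of every characteristic. This rests on the fact that the coefficient-extraction map $(\mathbf a,\mathbf b)\mapsto(P_j(\mathbf a,\mathbf b))_j$ is given by polynomials with \emph{integer} coefficients that are fixed once $m$ and $d$ are fixed, since polynomial addition and multiplication are performed identically in every commutative ring; the integers occurring are read as the terms $1+\cdots+1$ of the language, so reduction modulo the characteristic is handled automatically by the semantics. Similarly one checks that every degree bound above ($d\,m_4+m_2$, and hence the counts $A$, $B$ and the number of equations $P_j$) is independent of $k$. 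A secondary point worth noting is that ``isotropic vector of degree bounded by $m_4$'' refers to a genuine polynomial vector, so no denominators from $k(x_1,\dots,x_{m_1})$ intervene and the coding stays within $k^B$; degenerate forms cause no trouble, as the universal quantifier over $\mathbf a$ is left unrestricted. With these uniformity checks in place, the construction of $\Delta_m$ is complete.
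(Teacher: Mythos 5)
Your proposal is correct and follows essentially the same route as the paper's own (much sketchier) proof: quantifiers over polynomials of bounded degree are replaced by quantifiers over their coefficient tuples in $k$, and the identity $\varphi(v)=0$ in $k(x_1,\dots,x_{m_1})$ is unwound, by expanding and collecting monomials, into finitely many integer-coefficient polynomial equations $P_j(\mathbf a,\mathbf b)=0$ over $k$. Your write-up in fact supplies details the paper omits (explicit coefficient counts, the degree bound $d\,m_4+m_2$, and the characteristic-uniformity of the integer coefficients), so there is nothing to correct.
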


\begin{proof}
 We sketch how to obtain the formula $\Delta_m$.
Let $\varphi(a_1,\dots,a_{s};y_1,\dots,y_{m_3})$ denote the homogeneous polynomial of degree $d$
in the variables $y_1,\dots,y_{m_3}$ with coefficients $a_1,\dots,a_{s}$.
 The statement that any $m_3$-dimensional form $\varphi$ of degree $d$ with coefficients bounded in
degree by $m_2$ has an isotropic vector with entries of degrees bounded by $m_4$ is true if and only if the
following statement is true in $k[t_1,\dots,t_n]$:
$ \forall a_1, \dots, a_{s} \exists y_1, \dots, y_{m_3} :$
$$(\bigvee_i a_i=0)\bigvee ((\bigwedge_i {\rm deg}(a_i)\leq m_2)\wedge (\bigwedge_i){\rm deg}(y_i)\leq m_4)\wedge
(\bigvee_i y_i\not=0)\wedge $$
$$(\varphi(a_1,\dots,a_{s};y_1,\dots,y_{m_3})=0)).$$
Since the degrees of the polynomials over which we quantify are bounded, we can replace these quantifiers which range over
the polynomial ring, by quantifiers ranging over the ground field $k$. As in the proof of the last lemma, the
quantifier-free part of the formula which still involves the indeterminates $t_1,\dots,t_{m_2}$, can be easily replaced by
a quantifier-free formula over the base field $k$. This yields the formula $\Delta_m$.
\end{proof}

We can now prove Theorem 5:

\begin{proof} Fix $n$ and $s$. For any prime $p$ with $p\nmid d$, $\mathbb{F}_p((t))$ satisfies the sentence
$\Psi=\Delta_m$ with $m=(n,s,d^{n+2}+1,b(n,s))$. By the Ax-Kochen-Theorem there exists a finite set of primes $P(n,s)$,
such that for all primes $p\not\in P(n,s)$, $\mathbb{Q}_p$ satisfies $\Psi$. Hence any form of degree $d$ in
$M_s(\mathbb{Q}_p(t_1,\dots,t_n))$, $p\not\in P(n,s)$, of dimension greater than or equal to $d^{n+2}+1$ is isotropic.
We obtain  $\mu_s(\mathbb{Q}_p(t_1,\dots,t_n))\leq d^{n+2}$ for all but finitely many primes.
\end{proof}

\begin{corollary} For any positive integers $n$, $s$ such that $s>n$, there exists a finite set of primes $P(n, s)$
such that
$\mu_s(\mathbb{Q}_p(t_1,\dots,t_n)) = d^{n+2}$, for all primes $p$ such that both $p \not\in P(n, s)$
and $u(d,\mathbb{Q}_p)=d^2$.
\end{corollary}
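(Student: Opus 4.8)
The plan is to sandwich the quantity $\mu_s(\mathbb{Q}_p(t_1,\dots,t_n))$ between $d^{n+2}$ and $d^{n+2}$. The upper bound $\mu_s(\mathbb{Q}_p(t_1,\dots,t_n)) \leq d^{n+2}$ for every prime $p \notin P(n,s)$ is precisely Theorem 5, which I would invoke verbatim; thus the entire content of the corollary lies in the matching lower bound. Concretely, I must exhibit, for each admissible $p$, an anisotropic form of degree $d$ and dimension $d^{n+2}$ over $\mathbb{Q}_p(t_1,\dots,t_n)$ that genuinely lies in $M_s$, so as to force $\mu_s \geq d^{n+2}$.

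For the lower bound I would first use the hypothesis $u(d,\mathbb{Q}_p)=d^2$ to select an anisotropic form $\psi$ of degree $d$ and dimension $d^2$ over $\mathbb{Q}_p$; such a $\psi$ in fact exists for every $p$, and with coefficients in $\mathbb{Z}$, by the construction of Example 1. I would then set
$$\Phi = \langle 1,t_1,\dots,t_1^{d-1}\rangle \otimes \dots \otimes \langle 1,t_n,\dots,t_n^{d-1}\rangle \otimes \psi,$$
and apply Lemma 5 (iterated over the variables exactly as in Example 3) once for each $t_i$: every tensor factor $\langle 1,t_i,\dots,t_i^{d-1}\rangle$ multiplies the dimension by $d$ while preserving anisotropy, so $\Phi$ is anisotropic of dimension $d^n\cdot d^2 = d^{n+2}$ over $\mathbb{Q}_p(t_1,\dots,t_n)$. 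Combined with Theorem 5, this yields $\mu_s = d^{n+2}$, provided I can show $\Phi \in M_s$.

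The verification $\Phi \in M_s$ is the delicate step, and it is exactly the hypothesis on $s$ that controls it. The coefficients of $\Phi$ are the products $t_1^{a_1}\cdots t_n^{a_n}\,c$ with $0 \leq a_i \leq d-1$ and $c$ a constant coefficient of $\psi$; these are polynomials in $\mathbb{Z}[t_1,\dots,t_n]$ of total degree at most $n(d-1)$, so $\Phi \in M_{n(d-1)}(\mathbb{Q}_p(t_1,\dots,t_n)) \subseteq M_s$ as soon as $s \geq n(d-1)$. The main obstacle is therefore this degree bookkeeping rather than anything deep: for $d=2$ one has $n(d-1)=n$, which recovers Zahidi's hypothesis, but for general $d$ the stated condition $s>n$ is too weak and should be read as $s \geq n(d-1)$, the precise requirement for the witnessing form $\Phi$ to sit inside $M_s$. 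With that reading the two bounds coincide and yield $\mu_s(\mathbb{Q}_p(t_1,\dots,t_n)) = d^{n+2}$ for all $p \notin P(n,s)$ with $u(d,\mathbb{Q}_p)=d^2$.
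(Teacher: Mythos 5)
Your proof is correct, and it is exactly the argument the paper has in mind: the paper's entire proof of this corollary is the sentence that it is ``analogous to the one for $d=2$ given in [Z]'', and Zahidi's argument is precisely your sandwich --- the upper bound invoked verbatim from the paper's preceding theorem ($\mu_s(\mathbb{Q}_p(t_1,\dots,t_n))\leq d^{n+2}$ for all $p\notin P(n,s)$), and the lower bound furnished by the witness $\Phi=\langle 1,t_1,\dots,t_1^{d-1}\rangle\otimes\dots\otimes\langle 1,t_n,\dots,t_n^{d-1}\rangle\otimes\psi$, anisotropic of dimension $d^n\cdot d^2=d^{n+2}$ by iterating Lemma 5. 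Taking $\psi$ from Example 1 is also the right move; note that it makes the hypothesis $u(d,\mathbb{Q}_p)=d^2$ superfluous for both directions, since a $d^2$-dimensional anisotropic form over $\mathbb{Q}_p$ exists for every prime $p$ (for $d=2$ that hypothesis holds automatically for all $p$, which is presumably how it entered the statement).

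The real content of your write-up is the degree count, and you are right: it exposes a genuine defect in the statement as printed. The witness $\Phi$ lies in $M_{n(d-1)}$, and the hypothesis $s>n$ --- transcribed from the case $d=2$, where $n(d-1)=n$ --- guarantees $\Phi\in M_s$ only when $n(d-1)\leq n+1$, i.e. when $d=2$, or when $n=1$ and $d=3$. Nor can this be evaded by a cleverer choice of coefficients within the same method: a witness assembled from blocks of dimension at most $u(d,\mathbb{Q}_p)=d^2$ and proved anisotropic by the residue/descent argument of Lemma 5 needs $d^n$ monomial coefficients whose exponent vectors represent all classes of $(\mathbb{Z}/d\mathbb{Z})^n$, and the class of $(d-1,\dots,d-1)$ has no nonnegative representative of total degree less than $n(d-1)$. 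So for $d\geq 3$ the corollary is established only under your corrected hypothesis $s\geq n(d-1)$; in the range $n<s<n(d-1)$ the claimed equality $\mu_s(\mathbb{Q}_p(t_1,\dots,t_n))=d^{n+2}$ remains unproved (the upper bound survives, but only weaker lower bounds follow). Your proposed reading is the correct repair, and it is what the statement should say.
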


The proof is analogous to the one for $d=2$ given in [Z].

\bigskip\noindent
{\it Acknowledgements:}
 Parts of this paper were written during the author's stay  at the University of Trento
  supported by  the
``Georg-Thieme-Ged\"{a}chtnisstiftung'' (Deutsche Forschungsgemeinschaft).
  The author thanks K. Becher and P. Morandi
 for several useful conversations on the subject,  K. Zahidi for providing his preprint,
 and is greatly indebted to D. Leep, whose ideas and comments on an earlier draft greatly helped to improve the expositions.

\renewcommand{\baselinestretch}{1.0}

\smallskip
\noindent

\end{document}